\newenvironment{proof}{{\bf Proof:  }}{\hfill\rule{2mm}{2mm}}
\newcommand{\junk}[1]{}
\newcommand{\Var}{\mathrm{Var}}
\newtheorem{theorem}{Theorem}
\newtheorem{lemma}[theorem]{Lemma}
\newtheorem{conjecture}{Conjecture}
\newtheorem{corollary}[theorem]{Corollary}
\title{Comparing Eigenvector and Degree Dispersion with the Principal Ratio of a Graph}
\author{Gregory J. Clark\\
\small Sa\"id Business School\\[-0.8ex]
\small University of Oxford\\
}
\begin{document}
\date{} 
\maketitle
\begin{abstract}
The principal ratio of a graph is the ratio of the greatest and least entry of its principal eigenvector. Since the principal ratio compares the extreme values of the principal eigenvector it is sensitive to outliers.  This can be problematic for graphs (networks) drawn from empirical data.  To account for this we consider the dispersion of the principal eigenvector (and degree vector). More precisely, we consider the coefficient of variation of the aforementioned vectors, that is, the ratio of the vector's standard deviation and mean.  We show how both of these statistics are bounded above by the same function of the principal ratio.  Further this bound is sharp for regular graphs.  The goal of this paper is to show that the coefficient of variation of the principal eigenvector (and degree vector) can converge or diverge to the principal ratio in the limit.  In doing so we find an example of a graph family (the complete split graph) whose principal ratio converges to the golden ratio. We conclude with conjectures concerning extremal graphs of the aforementioned statistics and interesting properties of the complete split graph.     
\end{abstract}

\section{Introduction}


There are several measures of graph irregularity which are used to determine how `close' a given graph is to being regular.  One such measure, the \textit{principal ratio}, is the ratio of the greatest and least entry of the principal eigenvector of a graph $G$.  Throughout we assume that graphs are connected, simple, and undirected. We reserve  $(\lambda, x)$ to denote the principal eigenpair of the adjacency matrix of $G$. The principal ratio, then, is denoted $\gamma(G) = x_{\text{max}}/x_{\text{min}}$.  A limitation of this statistic is its sensitivity to the position (and to an extent the degree) of individual vertices in the graph.  For example, its applicability to empirical networks where the principal eigenvector is highly dispersed (e.g., scale-free networks \cite{Bollobas2003, Chung3, Pastor}). This motivates the study of the underlying vector's dispersion.  

Given a vector $x \in \mathbb{R}^n_+$ let $\mu$ denote its mean and $\sigma$ its standard deviation.  The coefficient of variation of $x$ is the ratio $\sigma/\mu$.  For convenience we will consider the square of the coefficient of variation which we denote 
\[
c_x := \left(\frac{\sigma}{\mu}\right)^2.
\]

The coefficient of variation and its square appear frequently across multiple disciplines: economic inequality \cite{Atkinson}, reliability of measures \cite{Shechtman}, and efficiency of experimental designs \cite{Grubbs}.  Interestingly, the inverse of the coefficient of variation, $\mu/\sigma$, is the signal-to-noise ratio (SNR).  The SNR is used informally to measure the ratio of useful to irrelevant information in a conversation \cite{Gragido}.  It has further found use in information theory \cite{Shannon}, optics \cite{Rose}, internet topology \cite{Kumar}, medical imaging \cite{Edelstein}, and nuclear engineering \cite{Hoult}.

Studying the distribution of vectors associated to graphs is not new.  Notably the variance of the degree vector was an early irregularity measure \cite{Bell}.  As such, we consider the coefficient of variation of the degree vector in addition to the principal eigenvector.  We reserve $d$ to denote the degree vector of a graph $G$.  We denote the square of the coefficient of variation of the principal eigenvector as $c_e$ and the degree vector as $c_d$. Studying the distribution of these two vectors in tandem will further aid in motivating their use in practice.  As we will see there are instances where $c_e$ and $c_d$ diverge.


The goal of this paper is to determine how much the principal ratio and the coefficient of variation of the principal eigenvector (and degree vector) can differ.  We begin by showing that $c_d$ and $c_e$ are both bounded above by the same function of principal ratio (namely, $\gamma^2-1$).  We then consider various graph families and compute the limit of the aforementioned statistics of said graphs.  In particular, we present graph families for which the limit is 0, non-zero, or diverges to infinity.  We summarize our results in Table \ref{T:Results} and note that all six cases for the limit of the principal ratio and the coefficient of variation (for the principal eigenvector or degree vector, respectively) are achieved.  We provide conjectures for which graphs are extremal for the greatest eigenvector and degree dispersion.  In doing so we introduce a statistic 
\[
\Gamma(G) = \frac{c_d(G) - c_e(G)}{\gamma^2(G)}
\]
and provide a conjecture for which graph achieves the lower bound.  We show that the limit of the principal ratio of the complete split graph ratio tends to $\varphi$, the golden ratio.  We conclude by showing that the complete spit graph also has an interesting empirical property: its local and global clustering diverge.  This is (to the best of our knowledge) the third such example of a graph family to do so.  



\begin{table}
\caption{The limit of $\gamma^2-1$, $c_e$, and $c_d$ for various graph families including the complete graph with an edge removed; a particular complete triparite graph, a complete split graph (i.e., $S(n,m) = K_{n+m} - K_{m})$; the complete graph with a pendant edge, a kite whose head is an $r$-regular graph; the star; and the Cartesian product of a graph with itself.}
\begin{center}
\begin{tabular}{ |c|c|c|c| } 
 \hline
 $G_n$ & $\lim \gamma^2-1$ & $\lim c_e$ & $\lim c_d$ \\  \hline \hline
 $K_{n}-K_2$ & $0$ & $0$ & $0$ \\ \hline
 $K_{1,n,n}$ &  $3$ & $0$ & $0$ \\  \hline
 $S(n,kn)$  & $\left(\frac{\sqrt{4k+1}+1}{2}\right)^2-1$ & $ \frac{k\left(\frac{\sqrt{4k+1}-1}{k}-2\right)^2}{(\sqrt{4k+1}+1)^2}$ & $\frac{k^3}{(2k+1)^2}$ \\ \hline

 $P_2K_{n-1}$ & $\infty$ & $0$ &$0$\\\hline
 $P_nG_n^r$ & $\infty$ & $-$ & $\left(\frac{r-2}{r+2}\right)^2$ \\\hline
 $K_{1,n}$ & $\infty$ & 1 & $\infty$ \\\hline
 $G^{\square n}$ & $\infty$ & $\infty$ & 0  \\\hline
\end{tabular}
\label{T:Results}
\end{center}
\end{table}

\section{Background}

Throughout we assume that graphs are connected, simple, and undirected.  We consider the distribution of a vector so that the indices are often irrelevant.  To better facilitate the discussion we will abuse notation and write
\[
x = ((x_i, m_i))_{i \geq 1} \text{ for } x \in \mathbb{R}^n
\]
to mean that $x_i$ occurs multiplicity $m_i$ in $x$.  Let $\mu$ and $\sigma$ denote the mean and standard deviation of $x$, then the \emph{square of the coefficient of variation} is $c_x = (\sigma/\,u)^2$.  With a slight abuse of notation we write $c_e(G)$ and $c_d(G)$ to be the square of the coefficient of variation for the principal eigenvector and degree vector of $G$, respectively.  When the context is clear we will simply write $c_e$ (respectively, $c_d$).  

In some cases we consider the difference of two graphs.  Let $H \subseteq G$ be two graphs on the same vertex set.  We write $G-H$ to be the graph formed by removing the edges of $H$ from $G$.  That is $E(G - H) = \{e \in G: e \notin H\}$ and $V(G-H) = V(G)$.

We begin with the following fact about the coefficient of variation of a positive vector.  

\begin{lemma}
\label{L:fact}
Let $x \in \mathbb{R}^n_{+}$ then
\begin{equation}
\label{E:formula}
c_x = n\frac{||x||_2^2}{||x||_1^2} - 1.
\end{equation}
Moreover, 
\[
c_x \leq \left(\frac{x_{\text{max}}}{x_{\text{min}}}\right)^2-1.
\]
\end{lemma}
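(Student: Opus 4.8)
The plan is to establish the two claims in Lemma~\ref{L:fact} separately, first deriving the closed-form identity \eqref{E:formula} and then using it to obtain the stated upper bound. For the identity, I would start from the definitions $\mu = \frac{1}{n}\sum_i x_i = \frac{1}{n}\|x\|_1$ and $\sigma^2 = \frac{1}{n}\sum_i (x_i - \mu)^2$. Expanding the square gives $\sigma^2 = \frac{1}{n}\sum_i x_i^2 - \mu^2 = \frac{1}{n}\|x\|_2^2 - \mu^2$. Dividing by $\mu^2$ then yields
\[
c_x = \frac{\sigma^2}{\mu^2} = \frac{\|x\|_2^2}{n\mu^2} - 1 = n\frac{\|x\|_2^2}{\|x\|_1^2} - 1,
\]
where the last step substitutes $\mu^2 = \|x\|_1^2/n^2$. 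This is a routine algebraic manipulation with no real obstacle.

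For the upper bound, the goal is to show $n\|x\|_2^2/\|x\|_1^2 \le (x_{\text{max}}/x_{\text{min}})^2$. The cleanest approach is to bound the numerator and denominator against the extreme entries: since every entry satisfies $x_{\text{min}} \le x_i \le x_{\text{max}}$, I have $\|x\|_2^2 = \sum_i x_i^2 \le n\, x_{\text{max}}^2$ and $\|x\|_1^2 = \left(\sum_i x_i\right)^2 \ge (n\, x_{\text{min}})^2 = n^2 x_{\text{min}}^2$. Combining these two crude bounds gives
\[
n\frac{\|x\|_2^2}{\|x\|_1^2} \le n\cdot\frac{n\, x_{\text{max}}^2}{n^2 x_{\text{min}}^2} = \frac{x_{\text{max}}^2}{x_{\text{min}}^2},
\]
and subtracting $1$ from both sides of the identity finishes the proof.

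The main thing to watch is that both inequalities point in the correct direction so that their quotient bounds $c_x$ from above: the numerator is overestimated using $x_{\text{max}}$ while the denominator is underestimated using $x_{\text{min}}$, which is exactly what is needed. I do not anticipate a genuine obstacle here; the only subtlety is confirming that positivity of $x$ (so that $x_{\text{min}} > 0$ and the ratio is well-defined) is used, which it is in bounding $\|x\|_1 \ge n\, x_{\text{min}}$. It is worth remarking that equality throughout forces all entries equal, so the bound is saturated precisely when $x$ is constant, consistent with the sharpness-for-regular-graphs claim in the abstract.
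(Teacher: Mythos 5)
Your proof is correct and follows essentially the same route as the paper: the identity is the standard variance decomposition $\sigma^2 = \frac{1}{n}\|x\|_2^2 - \mu^2$, which the paper merely phrases probabilistically via a coordinate drawn uniformly at random, and your inequality combines the same two bounds $\|x\|_2^2 \le n\,x_{\text{max}}^2$ and $\|x\|_1 \ge n\,x_{\text{min}}$ that the paper uses in normalized form (setting $\|x\|_2^2 = 1$ and invoking the averaging bound $x_{\text{max}} \ge n^{-1/2}$). Your direct version avoids the normalization detour, and your closing observation that equality forces a constant vector is consistent with the paper's sharpness claim for regular graphs.
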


\begin{proof}
Let $X$ to be the random variable which takes on $x_i$ with probability $1/n$.  Intuitively, $X$ draws a coordinate-value of $x$ uniformly at random. We have
\[
\mathbb{E}[X] = \frac{||x||_1}{n} \text{ and } \mathbb{E}[X^2] = \frac{||x||^2_2}{n}
\]
so that
\[
 ||x||_1^2 = n^2\mathbb{E}[X]^2 \text{ and } ||x||_2^2 = n \mathbb{E}[X^2] .
\]

It follows that
\[
\frac{||x||_2^2}{||x||_1^2} = \frac{\mathbb{E}[X^2]}{n\mathbb{E}[X]^2}.
\]
Whence $\Var(x) = \mathbb{E}[X^2] - \mathbb{E}[X]^2$ we have
\[
\frac{||x||_2^2}{||x||_1^2}  = \frac{\Var(X) + \mathbb{E}[X]^2}{n\mathbb{E}[X]^2}.
\]
Indeed 
\[
n\left(\frac{||x||_2^2}{||x||_1^2}\right) - 1 = \frac{\Var(X) + \mathbb{E}[X]^2}{\mathbb{E}[X]^2} - 1 = \frac{\Var(X)}{\mathbb{E}[X]^2} = c_v(x).
\]

Now assume that $||x||_2^2 = 1$.  By an averaging argument we have that $x_\text{max} \geq n^{-1/2}$ so that
\[
||x||_1^2 \geq n^2x_\text{min}^2 = x_\text{max}^2n^2\left(\frac{x_{\text{min}}}{x_{\text{max}}}\right)^2 \geq n \left(\frac{x_{\text{min}}}{x_{\text{max}}}\right)^2.
\]
The desired inequality follows by substitution into Equation \ref{E:formula}.
\end{proof}

This immediately implies the following.

\begin{lemma}
\label{L:eigen}
$c_e(G) \leq \gamma^2(G)-1$.
\end{lemma}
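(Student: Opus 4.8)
The plan is to obtain Lemma~\ref{L:eigen} as an immediate consequence of the second inequality in Lemma~\ref{L:fact}. The only real content is to verify that the principal eigenvector satisfies the hypotheses of that lemma, namely that it is a strictly positive vector and hence lies in the domain $\mathbb{R}^n_+$ on which the bound holds.

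First I would invoke the Perron--Frobenius theorem. Since $G$ is connected, its adjacency matrix is nonnegative and irreducible, so the eigenvalue $\lambda$ of largest modulus is simple and its eigenvector $x$ can be chosen with every entry strictly positive. Thus $x \in \mathbb{R}^n_+$, and in particular $x_{\min} > 0$, so that the principal ratio $\gamma(G) = x_{\max}/x_{\min}$ is well defined. Next I would simply apply Lemma~\ref{L:fact} to this $x$. By definition $c_e(G)$ is precisely $c_x$ for $x$ the principal eigenvector, and by definition $\gamma(G) = x_{\max}/x_{\min}$. Substituting these identifications into
\[
c_x \le \left(\frac{x_{\max}}{x_{\min}}\right)^2 - 1
\]
gives $c_e(G) \le \gamma^2(G) - 1$, as claimed.

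There is essentially no obstacle to overcome here; the work is entirely in recognizing that Lemma~\ref{L:fact} is directly applicable. The single point requiring care is the positivity of the principal eigenvector, which is exactly where the standing assumption that $G$ is connected enters: without connectivity the eigenvector could have a zero entry, $x_{\min}$ could vanish, and $\gamma(G)$ would fail to be defined. Because $G$ is assumed connected throughout, this subtlety never arises and the lemma follows in one line.
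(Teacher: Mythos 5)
Your proposal is correct and matches the paper exactly: the paper states that Lemma~\ref{L:eigen} ``immediately'' follows from Lemma~\ref{L:fact} applied to the principal eigenvector, which is precisely your argument. Your explicit Perron--Frobenius justification that the principal eigenvector is strictly positive (so $x \in \mathbb{R}^n_+$ and $\gamma(G)$ is well defined) is a detail the paper leaves implicit, but it is the same one-line proof.
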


Consider $c_d(G)$ and note that Lemma \ref{L:fact} yields $c_d \leq (\Delta/\delta)^2-1$.  Since $c_d(G)$ is a function of the degree vector and $\gamma(G)$ is a function of the principal eigenvector it is natural to think that they are incomparable, \textit{a priori}.  Surprisingly $c_d(G)$ is similarly bounded above and the bound is sharp for regular graphs. 

\begin{lemma}
\label{L:degree}
$c_d(G) \leq \gamma^2(G)-1$.
\end{lemma}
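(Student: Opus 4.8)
The plan is to start from the identity in Lemma~\ref{L:fact}, which reduces the claim to showing $n\,||d||_2^2/||d||_1^2 \le \gamma^2(G)$, and then to control the degree vector $d$ entirely through the principal eigenpair $(\lambda,x)$. The starting observation is the eigenvalue equation read off at each vertex, $\lambda x_i = \sum_{j\sim i} x_j$. Since every summand lies between $x_{\text{min}}$ and $x_{\text{max}}$ and there are exactly $d_i$ of them, this sandwiches each degree,
\[
\frac{\lambda x_i}{x_{\text{max}}} \;\le\; d_i \;\le\; \frac{\lambda x_i}{x_{\text{min}}}.
\]

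The second ingredient is an exact identity: since $d = A\mathbf{1}$ and $A$ is symmetric, $\sum_i d_i x_i = \mathbf{1}^{T} A x = \lambda\,\mathbf{1}^{T} x = \lambda\,||x||_1$. This identity is what makes the argument tight enough to reach $\gamma^2$. I would combine the two as follows. For the numerator, write $d_i^2 = d_i\cdot d_i$ and bound only one factor by the upper estimate above, keeping the other factor so as to invoke the identity:
\[
||d||_2^2 \;=\; \sum_i d_i^2 \;\le\; \frac{\lambda}{x_{\text{min}}}\sum_i d_i x_i \;=\; \frac{\lambda^2\,||x||_1}{x_{\text{min}}}.
\]
For the denominator, the lower estimate on $d_i$ gives $||d||_1 \ge \lambda\,||x||_1/x_{\text{max}}$, hence $||d||_1^2 \ge \lambda^2\,||x||_1^2/x_{\text{max}}^2$.

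Dividing, the factors of $\lambda^2$ and one factor of $||x||_1$ cancel, leaving
\[
n\frac{||d||_2^2}{||d||_1^2} \;\le\; \frac{n\,x_{\text{max}}^2}{x_{\text{min}}\,||x||_1}.
\]
The proof then finishes with the trivial bound $||x||_1 = \sum_i x_i \ge n\,x_{\text{min}}$, which turns the right-hand side into $x_{\text{max}}^2/x_{\text{min}}^2 = \gamma^2(G)$; together with Lemma~\ref{L:fact} this is exactly $c_d(G) \le \gamma^2(G)-1$.

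I expect the main obstacle to be resisting the naive route: bounding $d_i^2 \le (\lambda x_i/x_{\text{min}})^2$ in \emph{both} factors only yields $c_d+1 \le \gamma^2(c_e+1)$, which is strictly weaker since $c_e \ge 0$. The decisive trick is to spend the upper estimate on just one factor of $d_i$ and evaluate $\sum_i d_i x_i$ exactly via the symmetry identity; this is what collapses the bound to $\gamma^2$. Finally, every inequality used becomes an equality when $G$ is regular (all $x_i$ equal and all $d_i$ equal), which accounts for the claimed sharpness.
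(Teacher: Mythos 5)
Your proof is correct, and it takes a genuinely different route from the paper's. The paper's proof never touches the eigenvector directly: it computes $||d||_1^2=(2|E|)^2$, bounds $||d||_2^2=\sum_{uv\in E}(\deg u+\deg v)\le 2|E|\Delta$, deduces $c_d\le \Delta/\bar d-1\le \Delta/\delta-1$, and then closes by citing the known inequality $\sqrt{\Delta/\delta}\le\gamma$ from \cite{Cioaba, ZhangXD}. You instead work entirely from the eigenequation: the sandwich $\lambda x_i/x_{\text{max}}\le d_i\le \lambda x_i/x_{\text{min}}$ (valid since $x>0$ by Perron--Frobenius on a connected graph), the exact identity $\sum_i d_i x_i=\mathbf{1}^{T}Ax=\lambda||x||_1$, and finally $||x||_1\ge n\,x_{\text{min}}$; each step checks out, and your remark that spending the upper estimate on only one factor of $d_i$ is essential is accurate, since the two-factor version only yields $c_d+1\le\gamma^2(c_e+1)$. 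What each approach buys: yours is self-contained and elementary, replacing the external citation with a three-line argument and making visible exactly where the ratio $x_{\text{max}}/x_{\text{min}}$ enters; the paper's route, besides being shorter given the literature, produces the purely degree-based bound $c_d\le \Delta/\bar d-1$ as a byproduct, which it records as Lemma \ref{L:Cd_bound} and reuses in the later limit computations for specific graph families. Both arguments collapse to equalities for regular graphs, matching the claimed sharpness.
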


\begin{proof}
We have 
\[
||d||_1^2 = (2|E|)^2 \text{ and } ||d||_2^2 = \sum_{uv \in E(G)} \deg(u) + \deg(v) \leq 2|E|\Delta
\]
so that by Equation \ref{E:formula},
\[
c_d \leq n\left(\frac{2|E|\Delta}{4|E|^2}\right)-1 = \frac{n\Delta}{2|E|}-1 = \frac{\Delta}{\bar d} - 1 \leq \frac{\Delta}{\delta}-1.
\]
From \cite{Cioaba, ZhangXD} we have $\sqrt{\Delta/\delta} \leq \gamma$ yielding $c_d \leq \gamma^2-1$ as desired. 
\end{proof}

The coefficient of variation is invariant under normalization.  When considering $c_e$ it can be useful to specify a given normalization.  In some cases we will assume that $||x||_2^2 =1$ or we will assume $x_{\text{max}}$ (vis-a-vis $x_\text{min}$) is 1.

We have further shown the following.

\begin{lemma}
\label{L:Ce_bound}
$c_e \leq \frac{n}{\lambda+1} - 1$.
\end{lemma}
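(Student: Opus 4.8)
The plan is to start from the exact identity $c_e = n\,||x||_2^2/||x||_1^2 - 1$ furnished by Equation \ref{E:formula} (applied to the principal eigenvector, which is positive by Perron--Frobenius since $G$ is connected) and reduce the claimed bound to a single clean inequality. Adding $1$ and dividing by $n$, the asserted estimate $c_e \leq n/(\lambda+1) - 1$ is equivalent to
\[
\frac{||x||_2^2}{||x||_1^2} \leq \frac{1}{\lambda+1}, \qquad \text{that is,} \qquad (\lambda+1)\,||x||_2^2 \leq ||x||_1^2 .
\]
So the entire task is to prove this last inequality relating the two norms via the eigenvalue $\lambda$.

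The second step is to expand $||x||_1^2$ and bring in the eigenvalue equation. Writing $||x||_1^2 = \big(\sum_i x_i\big)^2 = \sum_{i,j} x_i x_j = ||x||_2^2 + \sum_{i \neq j} x_i x_j$, the target $(\lambda+1)\,||x||_2^2 \leq ||x||_1^2$ collapses, after cancelling $||x||_2^2$ from both sides, to $\lambda\,||x||_2^2 \leq \sum_{i \neq j} x_i x_j$. The left-hand side is a Rayleigh-type quantity: since $Ax = \lambda x$ and $A$ has zero diagonal, $\lambda\,||x||_2^2 = x^{T} A x = \sum_{i \neq j} A_{ij} x_i x_j$.

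The key step is then a termwise comparison between $\sum_{i \neq j} A_{ij} x_i x_j$ and $\sum_{i \neq j} x_i x_j$. Because $x$ is nonnegative, each product satisfies $x_i x_j \geq 0$, and because $A$ is a $0/1$ adjacency matrix we have $A_{ij} \leq 1$; hence $A_{ij} x_i x_j \leq x_i x_j$ for every pair $i \neq j$. Summing gives $\lambda\,||x||_2^2 = \sum_{i\neq j} A_{ij} x_i x_j \leq \sum_{i \neq j} x_i x_j$, which is exactly the reduced inequality. Substituting $||x||_2^2/||x||_1^2 \leq 1/(\lambda+1)$ back into Equation \ref{E:formula} yields $c_e \leq n/(\lambda+1) - 1$.

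I do not expect a real obstacle: the argument is essentially the observation that passing from the full Gram sum to the edge-restricted sum $x^{T}Ax$ only discards nonnegative terms. The one point demanding care is the sign condition $x_i x_j \geq 0$, which underwrites the termwise bound and is guaranteed by the Perron--Frobenius theorem for the connected graph $G$; without positivity of the principal eigenvector the comparison would fail.
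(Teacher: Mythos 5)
Your proof is correct and is essentially the paper's argument: the paper also applies the identity of Equation \ref{E:formula} and bounds $\lVert x\rVert_1^2 = \sum_{u,v} x_u x_v$ below by $\lambda\lVert x\rVert_2^2 + \lVert x\rVert_2^2$ (with the normalization $\lVert x\rVert_2^2 = 1$ and $\lambda = 2\sum_{uv\in E} x_u x_v$), which is exactly your observation that the full Gram sum dominates $x^{T}Ax$ plus the diagonal, since the discarded non-edge terms $x_i x_j$ are nonnegative by Perron--Frobenius. Your unnormalized, termwise phrasing $A_{ij}x_ix_j \leq x_ix_j$ is the same comparison in different notation.
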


\begin{proof}
Let $(\lambda, x)$ be the principal eigenpair of $G$ so that $||x||_2^2 = 1$.  Appealing to the Rayleigh quotient definition of the principal eigenvector we have
\[
\lambda = 2\sum_{uv \in E} x_ux_v.
\]
Note that
\[
||x||_1^2 = \sum_{u,v \in V}x_ux_v \geq  \left(2\sum_{uv \in E} x_ux_v \right)+ \sum_{u \in V} x_u^2 +   = \lambda + 1.
\]
By Equation \ref{E:formula} we have
\[
c_e(G) = n\frac{||x||_2^2}{||x||_1^2} - 1 \leq \frac{n}{\lambda +1} - 1.
\] 
\end{proof}

The following was a desideratum of Lemma \ref{L:degree}.
\begin{lemma}
\label{L:Cd_bound}
$c_d(G) \leq (\Delta/\overline{d})-1$ where $\overline{d}$ is the average degree.
\end{lemma}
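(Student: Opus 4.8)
The plan is to observe that this bound has in fact already been derived: it is precisely the penultimate expression appearing in the chain of inequalities in the proof of Lemma \ref{L:degree}, namely the quantity obtained just before the final relaxation $\Delta/\overline{d} \leq \Delta/\delta$. So rather than invoke the spectral comparison $\sqrt{\Delta/\delta} \leq \gamma$ from \cite{Cioaba, ZhangXD}, I would simply stop the computation one step earlier and read off the stated bound.

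Concretely, I would begin by applying the closed form of Equation \ref{E:formula} to the degree vector, giving $c_d = n\|d\|_2^2/\|d\|_1^2 - 1$. The denominator is immediate from the handshake lemma, $\|d\|_1 = \sum_v \deg(v) = 2|E|$, so $\|d\|_1^2 = 4|E|^2$. The only computational step is the numerator, where I would use the edge-sum identity $\|d\|_2^2 = \sum_v \deg(v)^2 = \sum_{uv \in E}\bigl(\deg(u)+\deg(v)\bigr)$; this holds because each vertex $v$ contributes the term $\deg(v)$ once for each of its $\deg(v)$ incident edges. Bounding every degree in that sum by $\Delta$ yields $\|d\|_2^2 \leq 2|E|\Delta$.

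Substituting both estimates into the formula gives $c_d \leq n(2|E|\Delta)/(4|E|^2) - 1 = n\Delta/(2|E|) - 1$, and recalling that the average degree satisfies $\overline{d} = 2|E|/n$, the prefactor $n/(2|E|)$ is exactly $1/\overline{d}$, so $c_d \leq \Delta/\overline{d} - 1$ as claimed. I anticipate no genuine obstacle here: the statement is an immediate byproduct of work already carried out, and the only place warranting a moment's care is the edge-sum identity for $\|d\|_2^2$, which is standard.
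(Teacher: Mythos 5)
Your proposal is correct and is exactly the paper's own argument: the paper states this lemma as a ``desideratum'' of Lemma \ref{L:degree}, whose proof already contains the chain $c_d \leq n\bigl(2|E|\Delta/(4|E|^2)\bigr)-1 = \Delta/\overline{d}-1$ before the final relaxation to $\Delta/\delta - 1$. Reading off the bound one step earlier, as you do, is precisely what the paper intends, and your verification of the edge-sum identity for $\|d\|_2^2$ matches the paper's computation.
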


\section{Dispersion of Various Graph Families}

We now consider the graph families listed in Table \ref{T:Results} in descending order. 

\subsection{The complete graph with an edge removed}
In this section we consider $K_n-K_2$, the complete graph with an edge removed. 
\begin{theorem}
Let $G_n = K_n-\{1,2\}$ with $n > 2$.  Then 
\[
\lim_{n \to \infty} \gamma(G_n)= 0,\; \lim_{n \to \infty} c_e(G_n) = 0, \; \text{ and } \lim_{n \to \infty} c_d(G_n) = 0.
\]
\end{theorem}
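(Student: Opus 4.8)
The plan is to exploit the large automorphism group of $G_n = K_n - \{1,2\}$ to determine the principal eigenvector exactly, read off $\gamma(G_n)$, and then obtain the two coefficients of variation from the bounds already in hand. First I would note that $\Aut(G_n)$ has exactly two orbits on vertices: the nonadjacent pair $\{1,2\}$, each of degree $n-2$, and the clique $\{3,\dots,n\}$, each of degree $n-1$. Since $G_n$ is connected, Perron--Frobenius makes $x$ the unique positive eigenvector up to scaling, so $x$ is constant on each orbit. Writing $a$ for its common value on $\{1,2\}$ and $b$ for its value on $\{3,\dots,n\}$, the eigenvalue equations reduce to the two scalar identities $\lambda a = (n-2)b$ and $\lambda b = 2a + (n-3)b$.

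Next I would eliminate $a$ to reach the quadratic $\lambda^2 - (n-3)\lambda - 2(n-2) = 0$ and take its Perron (larger) root $\lambda = \tfrac12\big(n-3 + \sqrt{n^2+2n-7}\big)$. Because the high-degree orbit carries the larger weight we have $b > a$, so the first identity gives $\gamma(G_n) = b/a = \lambda/(n-2)$ outright. Expanding $\sqrt{n^2+2n-7} = n+1+o(1)$ yields $\lambda = n-1+o(1)$, whence $\gamma(G_n) = \lambda/(n-2) \to 1$ and therefore $\gamma^2(G_n) - 1 \to 0$, the value recorded in Table \ref{T:Results}. (Since $\gamma \ge 1$ always, this is the precise content of the stated limit.)

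Finally the two dispersion limits follow with no further spectral work. By Lemma \ref{L:eigen}, $0 \le c_e(G_n) \le \gamma^2(G_n) - 1$, and since the right-hand side vanishes in the limit the squeeze theorem gives $c_e(G_n) \to 0$. For the degree vector I would not even need $\lambda$: here $\Delta = n-1$ and $\delta = n-2$, so Lemma \ref{L:fact} gives $0 \le c_d(G_n) \le (\Delta/\delta)^2 - 1 = \big(\tfrac{n-1}{n-2}\big)^2 - 1 \to 0$, again by squeezing. I do not expect a genuine obstacle here: the only steps requiring care are invoking uniqueness of the positive eigenvector to justify the two-value ansatz and selecting the correct root of the quadratic, after which everything reduces to routine asymptotics of an explicit expression.
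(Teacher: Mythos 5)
Your proof is correct and takes essentially the same route as the paper: the same two-orbit symmetry reduction to a two-valued Perron vector, an equivalent quadratic (yours in $\lambda$, the paper's in the entry ratio with the clique value normalized to $1$), and the same squeeze-against-a-vanishing-upper-bound strategy for both dispersions, with $\gamma \to 1$ (i.e.\ $\gamma^2 - 1 \to 0$, as in Table \ref{T:Results}) being the intended reading of the misprinted limit in the theorem statement. The only deviations are cosmetic---you invoke Lemma \ref{L:eigen} for $c_e$ and the $(\Delta/\delta)^2 - 1$ consequence of Lemma \ref{L:fact} for $c_d$, where the paper uses Lemmas \ref{L:Ce_bound} and \ref{L:Cd_bound}---and your informal appeal to ``higher degree implies larger Perron weight'' (not a general fact) is harmless here, since $b/a = \lambda/(n-2)$ with $n-2 < \lambda < n-1$ for $n > 2$, so $\gamma \to 1$ either way.
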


\begin{proof}
Let $G_n = K_n-\{1,2\}$ with $n > 2$ and let $(\lambda, x)$ be its principal eigenpair.

By symmetry we have $a:= x_1, x_2$ and $b:= x_i$ for $i > 2$ so that the eigenequations satisfy
\begin{align*}
    \lambda a &= (n-3)a + 2b\\
    \lambda b &= (n-2)a.
\end{align*}

Setting $b = 1$ yields 
\[
(n-2)a^2 - (n-3)a - 2 = 0
\]
so that 
\[
a = \frac{(n-3) + \sqrt{n^2+2n-7}}{2(n-2)}.
\]
Note that
\[
x = \left(\left(\frac{(n-3) + \sqrt{n^2+2n-7}}{2(n-2)},2\right), (1,n-2)\right)
\]
We have that
\[
\lim_n\gamma(K_n-K_2) = \lim_n a = 1.
\]
As $\lambda = (n-2)a$ we appeal to Lemma \ref{L:Ce_bound} to conclude
\[
\lim_{n \to \infty} c_e \leq \lim_{n \to \infty} \frac{n}{\lambda+1} -1 = \lim_{n \to \infty} \frac{n}{n}-1 = 0.
\]
Finally, whence $\Delta = n-1$ and $\delta = n-2$ we have from Lemma \ref{L:Cd_bound}
\[
\lim_{n \to \infty} c_d \leq \lim_{n \to \infty} \frac{\Delta}{\delta} - 1 = \lim_{n \to \infty} \frac{n-1}{n-2} -1 = 0.
\]
\end{proof}

\subsection{The complete tripartite graph}

Let $K_{a,b,c}$ be the complete tripartite graph with clouds of size $a,b,c$.

\begin{theorem}
We have
\[
\lim \gamma(K_{1,n,n}) = 2, \lim c_e(K_{1,n,n}) = 0,\text{ and } \lim c_d(K_{1,n,n}) = 0.
\]
\end{theorem}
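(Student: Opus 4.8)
The plan is to exploit the symmetry of $K_{1,n,n}$ to reduce the principal eigenvector to two distinct values, solve for the principal eigenpair in closed form, and then substitute into Lemma \ref{L:fact} to evaluate $c_e$ and $c_d$ directly. First I would note that $K_{1,n,n}$ has $2n+1$ vertices: a single apex adjacent to everything, together with two independent sets of size $n$, each vertex of which is adjacent to the apex and to the opposite set. Since interchanging the two size-$n$ sets is an automorphism, the principal eigenvector takes a common value $a$ on the apex and a common value $b$ on the remaining $2n$ vertices. Normalizing $b=1$, the eigenequations read $\lambda a = 2n$ and $\lambda = a + n$, which combine to $\lambda^2 - n\lambda - 2n = 0$; hence $\lambda = (n + \sqrt{n^2+8n})/2$ and $a = \lambda - n = (\sqrt{n^2+8n}-n)/2$. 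Since $a > 1$ exactly when $n > 1$, we have $x_{\text{max}} = a$, so $\gamma(K_{1,n,n}) = a \to 2$.

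For $c_e$ the subtlety is that Lemma \ref{L:Ce_bound} is not sharp enough here: as $\lambda \sim n+2$, that bound only gives $c_e \le (2n+1)/(\lambda+1) - 1 \to 1$, which does not pin the limit to $0$. I would therefore evaluate $c_e$ exactly from Equation \ref{E:formula} using the explicit eigenvector $x = ((a,1),(1,2n))$, for which $\|x\|_2^2 = a^2 + 2n$ and $\|x\|_1 = a + 2n$. Because $a \to 2$ remains bounded while $2n \to \infty$, the ratio $\|x\|_2^2/\|x\|_1^2$ is asymptotic to $2n/(2n)^2 = 1/(2n)$, so $c_e = (2n+1)\,\|x\|_2^2/\|x\|_1^2 - 1 \to 1 - 1 = 0$. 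Conceptually, the eigenvector is constant on all but a vanishing fraction $1/(2n+1)$ of its coordinates while the lone deviating entry stays bounded, so its coefficient of variation collapses.

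The degree computation runs in parallel. The apex has degree $2n$ and each of the other $2n$ vertices has degree $n+1$, so $\|d\|_1 = 2|E| = 2n(n+2)$ and $\|d\|_2^2 = (2n)^2 + 2n(n+1)^2$. Here too the elementary bound is too weak: Lemma \ref{L:Cd_bound} yields $c_d \le \Delta/\overline{d} - 1 = (2n+1)/(n+2) - 1 \to 1$. Applying Equation \ref{E:formula} directly instead, the leading behaviors $\|d\|_2^2 \sim 2n^3$ and $\|d\|_1^2 \sim 4n^4$ give $\|d\|_2^2/\|d\|_1^2 \sim 1/(2n)$, whence $c_d = (2n+1)\,\|d\|_2^2/\|d\|_1^2 - 1 \to 0$.

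The main obstacle is conceptual rather than computational. Unlike the $K_n - K_2$ case, where the crude bounds already tend to $0$, here both Lemma \ref{L:Ce_bound} and Lemma \ref{L:Cd_bound} tend to $1$ and so cannot certify the claimed limits; the resolution is to compute $c_e$ and $c_d$ exactly via Lemma \ref{L:fact}. Once the eigenpair and degree sequence are in hand the remaining asymptotics are routine, the governing phenomenon being that a single outlier among $2n+1$ entries contributes negligibly to the squared coefficient of variation as $n \to \infty$.
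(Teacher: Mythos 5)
Your proof is correct and takes essentially the same route as the paper: symmetry collapses the principal eigenvector to two values (you normalize $b=1$ where the paper sets $a=1$, an immaterial rescaling), and both $c_e$ and $c_d$ are then evaluated exactly via Equation \ref{E:formula} with the same asymptotics $\|x\|_2^2/\|x\|_1^2 \sim 1/(2n)$ and $\|d\|_2^2/\|d\|_1^2 \sim 1/(2n)$. Your aside that Lemmas \ref{L:Ce_bound} and \ref{L:Cd_bound} only certify a bound tending to $1$ here is accurate and explains why the paper, like you, computes exactly rather than bounding.
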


\begin{proof}
Consider $K_{1,n,n}$ and let $(\lambda, x)$ be its principal eigenpair.  By symmetry we have that $a:= x_1$ and $b:= x_i $ for $i > 1$ so that
\[
x = ((a,1), (b,2n)).
\]
Thus the eigenequations of $K_{1,n,n}$ are
\begin{align*}
    \lambda a &= 2nb \\
    \lambda b &= 1+nb.
\end{align*}
Setting $a = 1$ and solving for $b$ yields 
\[
b = \frac{n + \sqrt{n^2 + 8n}}{4n}
\]
so that
\[
x = \left((1,1), \left(\frac{n + \sqrt{n^2 + 8n}}{4n},2n\right)\right)
\]
We have 
\[
\lim_{n \to \infty} \gamma (K_{1,n,n}) = \lim_{n \to \infty}\frac{4n}{n + \sqrt{n^2 + 8n}} = 2.
\]

We further have 
\[
||x||_2^2 = 1 + 2nb^2 \text{ and } ||x||_1^2 = (1 + 2nb)^2
\]
which yields
\[
\lim_{n \to \infty} c_e(K_{1,n,n}) = \lim_{n \to \infty} (2n+1)\frac{||x||_2^2}{||x||_1^2} - 1 = 2n\left(\frac{n/2}{n^2}\right) -1 = 0.
\]
Moreover,
\[
d = ((2n,1), (n+1,2n))
\]
implying
\[
||d||_2^2 = (2n)^2 + 2n(n+1)^2, ||d||_1^2 = (2n+2n(n+1))^2.
\]
It follows that 
\[
\lim_{n \to \infty} c_d(K_{1,n,n}) = \lim_{n \to \infty} (2n+1)\frac{||d||_2^2}{||d||_1^2}-1 = 2n\left(\frac{2n^3}{4n^4}\right)-1 = 0.
\]
\end{proof}

\subsection{The complete split graph}

\begin{figure}[h]
    \centering
    \includegraphics[width=0.2\textwidth]{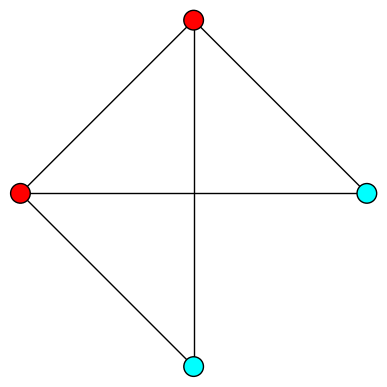}
    \includegraphics[width=0.2\textwidth]{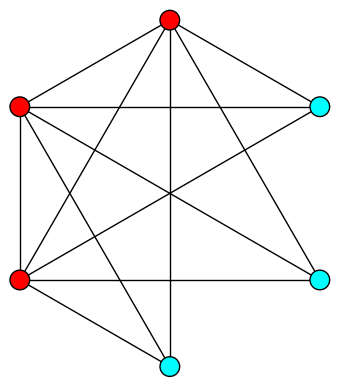}
    \includegraphics[width=0.2\textwidth]{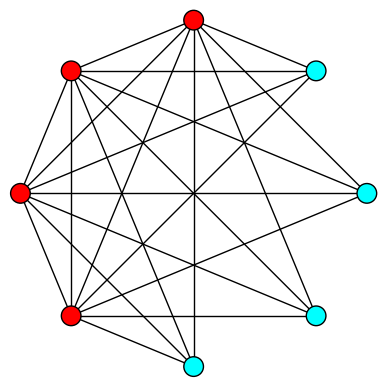}
    \includegraphics[width=0.2\textwidth]{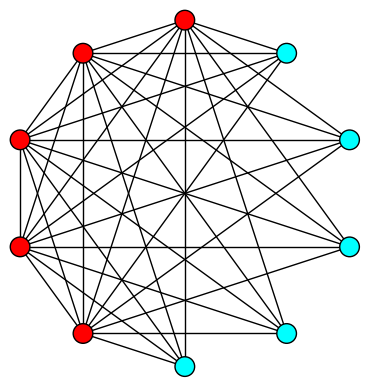}
    \caption{The complete split graph $S(n,n)$ for $2 \leq n \leq 5$.}
    \label{F:Quasi}
\end{figure}

We define the complete split graph to be a clique on $n$ vertices an independent set with $m$ vertices, $S(n,m) = K_{n+m}-K_m$ \cite{Foldes1977, Hammer1981, Tyshkevich1979}. A drawing of $S(n,n)$ is given in Figure \ref{F:Quasi}.  There are several existing graph structures related to $S(n,m)$.  Observe that the complete split graph can be seen as the union of a complete bipartite graph and a clique, $S(n,m) = K_{n,m} \cup K_n$ where multiple edges are ignored.  Note that $S(1,m) = K_{1,m}$ is the star with $m$ rays and $S(2,m)$ is the \textit{agave graph} as described in \cite{Estrada2}.  Further $S(n,m)$ can be viewed as the \emph{multicone} of a complete graph joined with isolated vertices (as in \cite{Wang}) so that $K_n \triangledown \overline{K}_m$ where $\overline{K}_m$ is the edgeless graph on $m$ vertices. 
\begin{lemma}
\label{L:Quasi}
Let $(\lambda, x)$ be the principal eigenpair of $S(n,m)$ then
 \begin{displaymath}
   x_v = \left\{
     \begin{array}{lr}
       1 & : v \leq n\\
       \frac{1 - n + \sqrt{(n-1)^2 + 4nm}}{2m} & : v > n.
     \end{array}
   \right.
\end{displaymath} 
Furthermore, under this normalization $x_\text{max} = 1$.
\end{lemma}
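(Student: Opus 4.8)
The plan is to use the vertex symmetry of $S(n,m)$ to collapse the eigenvector equation to two unknowns, solve a single quadratic, and then confirm the stated normalization with one short inequality. First I would record the structure: in $S(n,m) = K_{n+m} - K_m$ the $n$ clique vertices are mutually adjacent and each is adjacent to all $m$ vertices of the independent set, while the $m$ independent vertices are adjacent only to the clique. Any permutation of the clique vertices, and any permutation of the independent vertices, is an automorphism, so $\Aut(S(n,m))$ acts transitively on each of the two classes. Since the principal (Perron) eigenvector $x$ is the unique positive eigenvector, it is fixed by every automorphism: if $P$ is the permutation matrix of an automorphism then $PA = AP$, so $Px$ is again a positive eigenvector for $\lambda$, whence $Px = x$ by uniqueness. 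Therefore $x$ is constant on each orbit, and I may write $x_v = a$ for $v \le n$ and $x_v = b$ for $v > n$.

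Next I would write the two resulting eigenequations. Counting neighbours gives $\lambda a = (n-1)a + mb$ for a clique vertex and $\lambda b = na$ for an independent vertex. Because the coefficient of variation is scale-invariant I am free to normalize, so I would set $a = 1$; eliminating $\lambda$ between the two equations (using $\lambda = n/b$ from the second) yields the quadratic $mb^2 + (n-1)b - n = 0$. Taking the positive root gives
\[
b = \frac{1-n+\sqrt{(n-1)^2 + 4nm}}{2m},
\]
which is exactly the claimed value; positivity of this root follows since $\sqrt{(n-1)^2 + 4nm} > n-1$ whenever $nm > 0$.

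Finally, to establish $x_{\text{max}} = 1$ I must check $b \le a = 1$. This is equivalent to $\sqrt{(n-1)^2 + 4nm} \le 2m + (n-1)$, and squaring (both sides are nonnegative) reduces it to $4nm \le 4m^2 + 4m(n-1)$, i.e.\ $0 \le 4m(m-1)$, which holds for all $m \ge 1$, with equality precisely when $m=1$ (the regular case $S(n,1) = K_{n+1}$). I expect the only genuinely delicate point to be the symmetry reduction—namely justifying via Perron--Frobenius that the eigenvector is constant on the two vertex classes—since everything after that is routine algebra.
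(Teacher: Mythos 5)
Your proof is correct and takes essentially the same route as the paper: the same symmetry reduction to two values $a$ and $b$, the same eigenequations $\lambda a = (n-1)a + mb$ and $\lambda b = na$, the same quadratic $mb^2 + (n-1)b - n = 0$ after normalizing $a = 1$, and the same reduction of $b \leq 1$ to $m(m-1) \geq 0$. You add two small refinements the paper leaves implicit---an explicit Perron--Frobenius argument that the principal eigenvector is constant on the two automorphism orbits, and the observation that equality $b = 1$ occurs precisely at $m = 1$ (where $S(n,1) = K_{n+1}$ is regular), whereas the paper's proof asserts strict inequality under the assumption $m > 1$.
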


\begin{proof}
We have by symmetry that $a = x_v$ for $v \leq n$ and $b = x_v$ for $v > n$.  The eigenequations are of the form
\begin{align*}
    \lambda a &= (n-1)a + mb \\
    \lambda b &= na.
\end{align*}
Setting $a = 1$ and substituting the first eigenequation into the second yields
\[
mb^2 + (n-1)b - n = 0.
\]
Solving for $b$ gives the desired result. 

By straight forward computation we find that 
\[
\frac{1 - n + \sqrt{(n-1)^2 + 4nm}}{2m} < 1
\]
if and only if $m(m-1) > 0$.  Since $m > 1$ the inequality is always satisfied.
\end{proof}

\begin{theorem}
\label{L:princ_quasi}
Fix $k \geq 1$,
\[
\lim_{n \to \infty} \gamma(S(n,k\cdot n)) = \frac{\sqrt{4k+1}+1}{2}.
\]
\end{theorem}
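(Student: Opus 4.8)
The plan is to leverage Lemma \ref{L:Quasi}, which already supplies the principal eigenvector of $S(n,m)$ in closed form under the normalization $x_\text{max}=1$. Since the clique vertices carry value $1$ and the independent-set vertices carry the smaller value $b = \frac{1-n+\sqrt{(n-1)^2+4nm}}{2m}$, the principal ratio reduces to $\gamma(S(n,m)) = x_\text{max}/x_\text{min} = 1/b = \frac{2m}{1-n+\sqrt{(n-1)^2+4nm}}$. The first thing I would check is that $b$ really is the minimum entry, which is immediate from the inequality $b<1$ already established in Lemma \ref{L:Quasi}.

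Next I would substitute $m = kn$ to obtain $\gamma(S(n,kn)) = \frac{2kn}{1-n+\sqrt{(n-1)^2+4kn^2}}$, and then simplify the radicand as $(n-1)^2 + 4kn^2 = (4k+1)n^2 - 2n + 1$, so that $\sqrt{(n-1)^2+4kn^2} = n\sqrt{4k+1 - 2/n + 1/n^2}$. Dividing the numerator and denominator by $n$ and passing to the limit, the $1/n$ and $1/n^2$ terms vanish and the denominator tends to $\sqrt{4k+1}-1$, giving $\lim_{n\to\infty} \gamma(S(n,kn)) = \frac{2k}{\sqrt{4k+1}-1}$.

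The final step is a routine rationalization: multiplying by $\frac{\sqrt{4k+1}+1}{\sqrt{4k+1}+1}$ and using $(\sqrt{4k+1})^2 - 1 = 4k$ collapses the expression to $\frac{\sqrt{4k+1}+1}{2}$, which is the claimed value. As a sanity check, setting $k=1$ recovers $\frac{\sqrt{5}+1}{2}=\varphi$, matching the golden-ratio remark advertised in the introduction.

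There is no genuine obstacle here, since Lemma \ref{L:Quasi} hands us the eigenvector explicitly and the computation is purely algebraic. The only point demanding care is bookkeeping in the denominator: one must confirm that the dominant $n\sqrt{4k+1}$ term correctly dominates the lower-order $-n$ and constant contributions, so that the $O(1/n)$ corrections under the square root are negligible in the limit.
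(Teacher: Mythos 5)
Your proposal is correct and follows essentially the same route as the paper: both invoke Lemma \ref{L:Quasi} to write $\gamma(S(n,kn)) = 1/b$, substitute $m = kn$, and pass to the limit, with the rationalization $\frac{2k}{\sqrt{4k+1}-1} = \frac{\sqrt{4k+1}+1}{2}$ implicit in the paper's final step. Your added care in verifying that $b$ is indeed the minimum entry and in tracking the lower-order terms under the radical is sound bookkeeping but does not change the argument.
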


\begin{proof}
From Lemma \ref{L:Quasi} we have that the principal ratio of $S(n,m)$ is 
\[
\gamma(S(n,kn)) = \frac{1}{b} = \frac{2kn}{1-n + \sqrt{(n-1)^2 + 4kn^2}}.
\]
We have then 
\[
\lim_{n \to \infty} \gamma(S(n,kn)) = \lim_{n \to \infty} \frac{2kn}{(\sqrt{4k+1} - 1)n} =  \frac{\sqrt{4k+1}+1}{2}.
\]
\end{proof}

\begin{corollary}
\label{C:golden}
We have $\lim_{n \to \infty} \gamma(S(n,n)) = \varphi$ is the golden ratio.
\end{corollary}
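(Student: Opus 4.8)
The plan is simply to specialize Theorem \ref{L:princ_quasi}, which has already been established, to the case $k = 1$. That theorem gives, for each fixed $k \geq 1$,
\[
\lim_{n \to \infty} \gamma(S(n,k\cdot n)) = \frac{\sqrt{4k+1}+1}{2},
\]
so substituting $k = 1$ directly yields
\[
\lim_{n \to \infty} \gamma(S(n,n)) = \frac{\sqrt{4+1}+1}{2} = \frac{\sqrt{5}+1}{2}.
\]
The only remaining task is to recognize this quantity as the golden ratio.

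This identification is immediate, since $\varphi = \frac{1+\sqrt{5}}{2}$ by definition, being the positive root of $x^2 - x - 1 = 0$. As a consistency check one may verify the defining relation $\varphi^2 = \varphi + 1$ for the value obtained: computing $\left(\frac{\sqrt{5}+1}{2}\right)^2 = \frac{3+\sqrt{5}}{2}$ and comparing with $\frac{\sqrt{5}+1}{2} + 1 = \frac{\sqrt{5}+3}{2}$ confirms equality.

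Since the statement is a direct specialization of the preceding theorem, I do not anticipate any genuine obstacle; the entire analytic content resides in Theorem \ref{L:princ_quasi} and its proof via Lemma \ref{L:Quasi}. The single point requiring care is the arithmetic bookkeeping of the radicand when setting $k = 1$, namely confirming that $4k+1$ becomes $5$ rather than some other constant. Beyond that, the corollary follows by substitution and the elementary observation identifying $\frac{1+\sqrt{5}}{2}$ with $\varphi$.
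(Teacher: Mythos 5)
Your proof is correct and matches the paper's approach exactly: the corollary is an immediate specialization of Theorem \ref{L:princ_quasi} at $k=1$, giving $\frac{\sqrt{5}+1}{2} = \varphi$ (the paper states it without further proof for precisely this reason). Your consistency check via $\varphi^2 = \varphi + 1$ is a nice, if unnecessary, touch.
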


\begin{theorem}
Fix $k \geq 1$, 
\[
\lim_{n \to \infty} c_e(S(n,kn)) = \frac{k\left(\frac{\sqrt{4k+1}-1}{k}-2\right)^2}{(\sqrt{4k+1}+1)^2}.
\]
\end{theorem}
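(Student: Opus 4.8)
The plan is to compute $c_e(S(n,kn))$ exactly as a function of the single scalar $b$ supplied by Lemma \ref{L:Quasi}, and then pass to the limit. By Lemma \ref{L:Quasi} the principal eigenvector (normalized so that $x_{\max}=1$) equals $1$ on each of the $n$ clique vertices and equals $b = \frac{1-n+\sqrt{(n-1)^2+4kn^2}}{2kn}$ on each of the $m=kn$ independent vertices, so in the multiplicity notation $x = ((1,n),(b,kn))$. First I would record
\[
||x||_2^2 = n(1+kb^2), \quad ||x||_1^2 = n^2(1+kb)^2, \quad N = n(1+k),
\]
and substitute into Equation \ref{E:formula}. The factors of $n$ cancel, leaving the exact identity
\[
c_e(S(n,kn)) = \frac{(1+k)(1+kb^2)}{(1+kb)^2} - 1,
\]
so that $c_e$ depends on $n$ only through $b$.

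Next I would let $n\to\infty$. Since the right-hand side is a continuous function of $b$ on $(0,1]$, it suffices to know $\beta := \lim_n b$. By Theorem \ref{L:princ_quasi} the principal ratio $1/b = \gamma(S(n,kn))$ tends to $\frac{\sqrt{4k+1}+1}{2}$, whence $\beta = \frac{2}{\sqrt{4k+1}+1} = \frac{\sqrt{4k+1}-1}{2k}$, giving
\[
\lim_{n\to\infty} c_e(S(n,kn)) = \frac{(1+k)(1+k\beta^2)}{(1+k\beta)^2} - 1.
\]

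The crux is the algebraic simplification, and the step I expect to matter most is exploiting the two identities that $\beta$ satisfies. Dividing the defining quadratic $knb^2 + (n-1)b - n = 0$ of Lemma \ref{L:Quasi} by $n$ and letting $n\to\infty$ gives $k\beta^2 + \beta - 1 = 0$, i.e. $1+k\beta^2 = 2-\beta$; and from $\beta = \frac{2}{\sqrt{4k+1}+1}$ one reads off $1+k\beta = \frac{\sqrt{4k+1}+1}{2} = 1/\beta$. Substituting both yields
\[
\lim_{n\to\infty} c_e(S(n,kn)) = (1+k)(2-\beta)\beta^2 - 1,
\]
and expanding with $k = (1-\beta)/\beta^2$ collapses the cubic in $\beta$ to $(1-\beta)^3$.

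Finally I would reconcile $(1-\beta)^3$ with the tabulated closed form. Writing $s = \sqrt{4k+1}$, the identities $\frac{s-1}{k} - 2 = 2(\beta-1)$ and $(s+1)^2 = 4/\beta^2$ turn the claimed expression $\frac{k\left(\frac{s-1}{k}-2\right)^2}{(s+1)^2}$ into $k\beta^2(1-\beta)^2$, which equals $(1-\beta)^3$ again by $k\beta^2 = 1-\beta$. The only real pitfall is keeping the two characterizations of $\beta$ straight and confirming that the cubic genuinely factors as $(1-\beta)^3$; once that is checked, the equality with the stated formula is immediate.
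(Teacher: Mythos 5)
Your proof is correct and follows essentially the same route as the paper: both derive an exact expression for $c_e(S(n,kn))$ as a function of the single eigenvector entry $b$ (your $(1+k)(1+kb^2)/(1+kb)^2 - 1$ is algebraically identical to the paper's $k(b-1)^2/(kb+1)^2$) and then pass to the limit using $\lim_n b = 2/(\sqrt{4k+1}+1)$ from Theorem \ref{L:princ_quasi}. Your extra simplification showing the limit collapses to $(1-\beta)^3$ via the quadratic $k\beta^2+\beta-1=0$ is a nice verification the paper omits, but it does not change the argument's structure.
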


\begin{proof}
Let $(\lambda, x)$ be the principal eigenpair of $S(n,m)$ as in Lemma \ref{L:Quasi}.  Clearly
\[
\mu = \frac{n + mb}{n+m}
\]
and 
\[
\sigma^2 = \frac{(b-1)^2(nm^2+n^2m)}{(n+m)^3}.
\]
We have then 
\[
c_e = \frac{\sigma^2}{\mu^2} = \frac{(b-1)^2(nm^2+n^2m)}{(n+m)(n+mb)^2}.
\]
Setting $m = kn$ yields
\[
c_e = \frac{(b-1)^2k}{(kb+1)^2}.
\]
From the proof of Theorem \ref{L:princ_quasi} we have

\[
\lim_{n \to \infty} b =\lim_{n \to \infty} \gamma(S(n,kn))^{-1} = \frac{2}{\sqrt{4k+1}+1}
\]
so that 
\[
\lim_{n \to \infty} c_e(S(n,kn)) = \lim_{n \to \infty} \frac{(b-1)^2k}{(kb+1)^2} = \frac{k\left(\frac{\sqrt{4k+1}-1}{k}-2\right)^2}{(\sqrt{4k+1}+1)^2}.
\]
\end{proof}

\begin{theorem}
Fix $k$ then
\[
\lim_{n \to \infty} c_d(S(n,kn)) = \frac{k^3}{(2k+1)^2}.
\]
\end{theorem}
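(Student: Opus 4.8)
The plan is to compute the degree vector of $S(n,kn)$ explicitly, apply the norm formula of Lemma \ref{L:fact}, and then extract the limit by comparing leading-order coefficients.

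First I would record the degrees. In $S(n,m) = K_{n+m} - K_m$ the $n$ clique vertices are adjacent to all of the other $n+m-1$ vertices, while the $m$ independent vertices are adjacent only to the $n$ clique vertices. Hence
\[
d = ((n+m-1,\, n),\,(n,\, m)),
\]
i.e. the value $n+m-1$ occurs $n$ times and the value $n$ occurs $m$ times. Setting $m = kn$ gives clique-degree $(k+1)n - 1$ with multiplicity $n$ and independent-degree $n$ with multiplicity $kn$, on a vertex set of size $N = (k+1)n$.

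Next I would assemble the two norms. Summing degrees (equivalently, twice the edge count $\binom{n}{2} + nm$) gives $\|d\|_1 = n(n-1) + 2nm = (2k+1)n^2 - n$, and $\|d\|_2^2 = n((k+1)n-1)^2 + kn\cdot n^2$. Substituting these into Equation \ref{E:formula},
\[
c_d(S(n,kn)) = N\frac{\|d\|_2^2}{\|d\|_1^2} - 1,
\]
reduces the claim to a limit of a ratio of polynomials in $n$.

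Finally I would evaluate the limit via leading coefficients. Both $N\|d\|_2^2$ and $\|d\|_1^2$ are degree-four polynomials in $n$; their leading coefficients are $(k+1)\big((k+1)^2 + k\big) = (k+1)(k^2+3k+1)$ and $(2k+1)^2$ respectively, so the ratio tends to $(k+1)(k^2+3k+1)/(2k+1)^2$. Subtracting $1$ and expanding the numerator, the expected cancellation occurs, namely $(k+1)(k^2+3k+1) - (2k+1)^2 = (k^3+4k^2+4k+1) - (4k^2+4k+1) = k^3$, yielding $\lim_n c_d = k^3/(2k+1)^2$. There is no genuine obstacle beyond bookkeeping; the only point requiring care is verifying this cancellation of the lower-order terms so that the numerator collapses exactly to $k^3$.
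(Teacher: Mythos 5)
Your proposal is correct and follows essentially the same route as the paper: write down the degree vector $d = ((n+m-1,n),(n,m))$, compute the squared coefficient of variation exactly, substitute $m=kn$, and take the limit, where the cancellation $(k+1)(k^2+3k+1)-(2k+1)^2=k^3$ you verify is exactly what produces the paper's closed form $c_d(S(n,kn)) = kn^2(kn-1)^2/((2k+1)n^2-n)^2$. The only cosmetic difference is that the paper computes $\mu$ and $\sigma^2$ directly and simplifies before taking the limit, whereas you invoke the norm identity of Lemma \ref{L:fact} and compare leading coefficients; these are equivalent by that same lemma.
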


\begin{proof}
Let $d$ be the degree vector of $S(n,m)$ so that 
\[
d = ((n+m-1,n),(n,m)).
\]
We have
\[
\mu = \frac{n^2 + 2nm - n}{n+m}
\]
and 
\[
\sigma^2 = \frac{(n(n+m-1-\mu)^2 + m(n-\mu)^2}{n+m} = \frac{(m-1)^2(nm^2+n^2m)}{(n+m)^3}.
\]
We equate
\[
c_d(S(n,m) = \frac{\sigma^2}{\mu^2} = \frac{(m-1)^2(nm^2 + n^2m)}{(n+m)(n^2+2nm-n)^2}
\]
substituting $m=kn$ yields
\[
c_d(S(n,kn) = \frac{kn^2(kn-1)^2}{(n^2(2k+1)-n)^2}.
\]
It follows that 
\[
\lim_{n \to \infty} c_d(S(n,kn) = \frac{k^3}{(2k+1)^2}.
\]

\end{proof}

\subsection{The kite graph}

The kite graph (aka lollipop graph) $P_mK_s$ is formed by identifying the last vertex of a path on $m$ vertices with a vertex from the complete graph on $s$ vertices.  Examples are given in Figure \ref{F:Kite}.  Kite graphs appear frequently in the literature of the principal ratio \cite{Brightwell, Tait}.  We begin with a bound on the spectral radius of the kite graph and an explicit formula for the principal eigenvector of the kite graph as provided in \cite{Cioaba}.

\begin{figure}[h]
    \centering
    \includegraphics[width=0.2\textwidth]{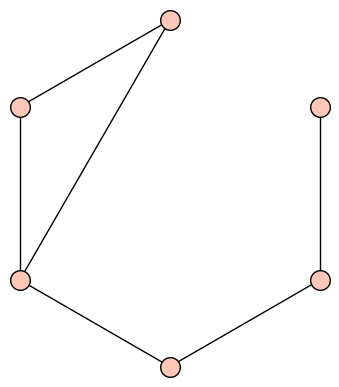}
    \includegraphics[width=0.23\textwidth]{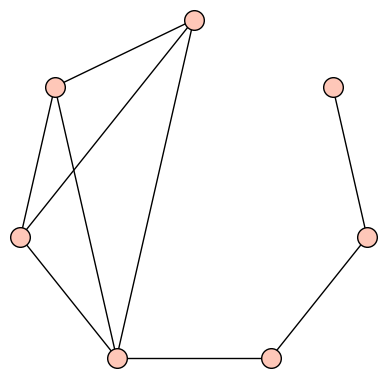}
    \includegraphics[width=0.23\textwidth]{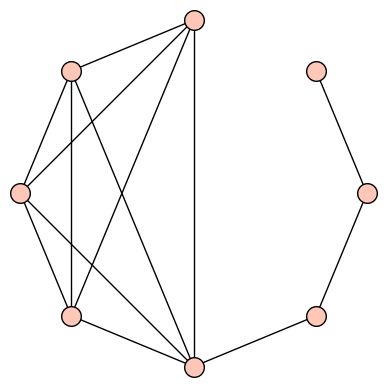}
    \caption{$P_4K_3, P_4K_4$ and $P_4K_5$.}
    \label{F:Kite}
\end{figure}

\begin{lemma}
\label{L:kite_radius}
(\cite{Cioaba}) For $m \geq 2$ and $s \geq 3$, 
\[
s-1 + \frac{1}{s(s-1)} < \lambda(P_m\cdot K_s) < s-1 + \frac{1}{(s-1)^2}.
\]
\end{lemma}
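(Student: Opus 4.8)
The plan is to combine the Perron--Frobenius structure of the principal eigenvector with two complementary tools: a Collatz--Wielandt (max-ratio) estimate for the upper bound, and a vertex-deletion monotonicity argument for the lower bound. By Perron--Frobenius $x>0$, and the $s-1$ clique vertices not meeting the path are interchangeable, so they share a common value, which I normalise to $1$. Writing $\lambda=s-1+\epsilon$, the eigenequation at such a vertex forces the attachment vertex to have value $\lambda-(s-2)=1+\epsilon$, and the eigenequation at the attachment vertex then forces its path-neighbour to have value $(s-1+\epsilon)(1+\epsilon)-(s-1)=\epsilon(s+\epsilon)$. Along the pendant path the entries obey $x_{j+1}=\lambda x_j-x_{j-1}$ with a Dirichlet condition at the degree-one tip, so they are hyperbolic-sine (Chebyshev) samples. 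The whole problem thus collapses to controlling $\epsilon$, and since $\tfrac{1}{(s-1)^2}-\tfrac{1}{s(s-1)}=\tfrac{1}{s(s-1)^2}$ the target band has width only $O(s^{-3})$ while $\epsilon$ itself is of order $s^{-2}$; this already signals that every estimate must be pushed to second order.

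For the upper bound I would invoke the Collatz--Wielandt inequality: for any positive vector $y$, $\lambda\le\max_i (Ay)_i/y_i$ where $A$ is the adjacency matrix. Take $y$ equal to $1$ on the non-attachment clique vertices, equal to $\alpha$ on the attachment vertex, and decaying geometrically with ratio $\beta$ out along the path, where $\beta$ is the smaller root of $t+t^{-1}=\mu$ and $\mu:=s-1+\tfrac{1}{(s-1)^2}$. With this choice every interior path inequality holds with equality, the tip is strict because $1/\beta=\tfrac{\mu+\sqrt{\mu^2-4}}{2}<\mu$, and the clique and attachment constraints reduce to the single requirement $\tfrac{s-1}{\mu-\beta}\le\alpha\le\mu-s+2$. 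Showing that this interval for $\alpha$ is nonempty for all $s\ge 3$ gives $Ay\le\mu y$ with at least one strictly-satisfied coordinate, whence $\lambda<\mu$.

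For the lower bound I would use monotonicity: deleting the $m-2$ path vertices nearest the tip exhibits $P_2K_s$ as a (proper, when $m>2$) vertex-deleted subgraph of $P_mK_s$, so by Perron--Frobenius $\lambda(P_mK_s)\ge\lambda(P_2K_s)$. It therefore suffices to treat the single-pendant kite. Here the reduced eigenequations give the explicit cubic
\[
(\lambda-s+2)(\lambda^2-1)=(s-1)\lambda,
\]
whose Perron root is $\lambda(P_2K_s)$. Writing $g(\lambda)$ for the difference of the two sides, I would show $g(\nu)<0$ at $\nu:=s-1+\tfrac{1}{s(s-1)}$; since $\nu$ lies to the right of the two small roots of $g$, this places $\nu$ strictly below the Perron root and yields $\lambda(P_2K_s)>\nu$, hence $\lambda(P_mK_s)>\nu$ for all $m\ge 2$.

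The main obstacle is the second-order bookkeeping in the two reduced scalar inequalities. Verifying $\tfrac{s-1}{\mu-\beta}\le\mu-s+2$ uniformly in $s$ is delicate because $\beta=\tfrac{\mu-\sqrt{\mu^2-4}}{2}$ carries a square root that must be expanded precisely to the order at which the $O(s^{-3})$ gap becomes visible, and likewise the sign of $g(\nu)$ hinges on a near-cancellation of the leading $s^2$ terms. Conceptually these two computations are exactly the two extremes $m\to\infty$ and $m=2$ of the family, so the bounds are sharp in that limiting sense; the remaining care is to confirm the test vector is genuinely positive and that the cubic's largest root is indeed the spectral radius.
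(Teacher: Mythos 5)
You should first note that the paper does not actually prove this lemma: it is quoted from \cite{Cioaba}, where the bounds are derived from estimates on the eigenvalue equations/characteristic polynomial of the kite. So there is no in-paper proof to match; judged on its own merits, your proposal is correct, and it is a genuinely self-contained alternative route. I checked the two computations you flagged as delicate, and both close. For the upper bound, with $q=s-1$ and $\mu=q+q^{-2}$ you have $\mu-\beta=\beta^{-1}$, so nonemptiness of the interval $\frac{s-1}{\mu-\beta}\le\alpha\le\mu-s+2$ is exactly $q\beta\le 1+q^{-2}$; since $t\mapsto t+t^{-1}$ is decreasing on $(0,1)$, this follows from $f(\beta_0)<\mu$ at $\beta_0=\frac{q^2+1}{q^3}$, which after clearing denominators reduces to
\[
q^3-2q^2+q-1=q^2(q-2)+(q-1)>0 \qquad (q\ge 2),
\]
so $\beta<\beta_0$ strictly and the test vector exists; strictness $\lambda<\mu$ then follows by pairing the one strict coordinate (the tip) against the positive left Perron vector. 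For the lower bound, writing $\lambda=q+\epsilon$ gives the exact identity $g(q+\epsilon)=-1+\epsilon(q^2+q-1)+(2q+1)\epsilon^2+\epsilon^3$, and at $\epsilon=\frac{1}{q(q+1)}$ this collapses to $g(\nu)=\epsilon\bigl(-1+(2q+1)\epsilon+\epsilon^2\bigr)$, which is negative precisely when $(q^2+q)(q^2-q-1)>1$, true for all $q\ge 2$; hence $\lambda(P_2K_s)>\nu$, and subgraph monotonicity handles $m>2$. Two small tidy-ups you correctly anticipated: the largest root of your cubic is indeed $\lambda(P_2K_s)$ because the partition into non-attachment clique vertices, attachment vertex, and pendant is equitable (the Perron vector is constant on cells, and the remaining spectrum is just $-1$ with multiplicity $s-2$), so you do not even need to locate the two small roots --- $g(\nu)<0$ together with $g\to+\infty$ suffices. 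Compared with the source's computation-with-the-characteristic-polynomial approach, your Collatz--Wielandt test vector buys a bound that is manifestly uniform in $m$ in a single stroke, at the cost of the second-order expansion of $\beta$; your $m=2$ reduction for the lower bound is essentially the same kind of polynomial-sign argument the literature uses, organized around the observation that $m=2$ and $m\to\infty$ are the extremal cases of the family.
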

We now show how the entries of the principal eigenvector of $P_mK_s$ can be expressed in terms of the spectral radius.
\begin{lemma}
\label{L:kite_vec}
(\cite{Cioaba}) Let $\lambda$ be the greatest eigenvalue of $P_mK_s$ and let $x$ be its principal eigenvector.  Let $\sigma = (\lambda + \sqrt{\lambda -4})/2$ and $\tau = \sigma^{-1}$ then
\[
x_k = \frac{\sigma^k-\tau^k}{\sigma-\tau}x_1 \text{ for } 1 \leq k \leq m,
\]
and 
\[
x_k = \frac{1}{s-1}\frac{\sigma^{m+1}-\tau^{m+1}}{\sigma-\tau} x_1 \text{ for } m+1 \leq k \leq n.
\]
\end{lemma}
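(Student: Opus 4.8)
The plan is to solve the eigenequations directly, exploiting the fact that on the path portion they reduce to a constant-coefficient linear recurrence whose characteristic roots are precisely $\sigma$ and $\tau$. First I would fix notation: label the path vertices $1, \dots, m$, so that vertex $m$ is the one identified with the clique, and label the remaining $s-1$ clique vertices $m+1, \dots, n$ where $n = m+s-1$. By Perron--Frobenius the principal eigenvector $x$ is strictly positive, and since the $s-1$ non-junction clique vertices are interchanged by the automorphisms of $P_mK_s$, they must all carry a common value $c := x_{m+1} = \dots = x_n$. At each interior path vertex $k$ with $2 \le k \le m-1$ the eigenequation reads $\lambda x_k = x_{k-1} + x_{k+1}$, i.e. $x_{k+1} = \lambda x_k - x_{k-1}$.

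Next I would solve this recurrence. Its characteristic polynomial is $t^2 - \lambda t + 1$, whose roots are $\sigma$ and $\tau = \sigma^{-1}$; in particular $\sigma + \tau = \lambda$ and $\sigma\tau = 1$. Hence $x_k = A\sigma^k + B\tau^k$ for constants $A,B$. To pin down the ratio $A/B$ I would impose the boundary condition at the pendant vertex $1$, namely $\lambda x_1 = x_2$. Substituting the general solution and using $\lambda - \sigma = \tau$, $\lambda - \tau = \sigma$, and $\sigma\tau = 1$, this collapses to $(A+B)\sigma\tau = 0$, forcing $B = -A$. Thus $x_k = A(\sigma^k - \tau^k)$; evaluating at $k=1$ gives $A = x_1/(\sigma - \tau)$, which yields the first formula for all $1 \le k \le m$, since the recurrence propagates the closed form from vertex $1$ up through vertex $m$.

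It then remains to compute $c$. Here I would use the eigenequation at the junction vertex $m$, whose neighbors are vertex $m-1$ on the path together with all $s-1$ clique vertices: $\lambda x_m = x_{m-1} + (s-1)c$, so $(s-1)c = \lambda x_m - x_{m-1}$. Substituting $x_m = A(\sigma^m - \tau^m)$ and $x_{m-1} = A(\sigma^{m-1} - \tau^{m-1})$, the key simplification is the pair of identities $\lambda\sigma - 1 = \sigma^2$ and $\lambda\tau - 1 = \tau^2$ (both immediate from $\sigma + \tau = \lambda$ and $\sigma\tau = 1$), which lets $\lambda x_m - x_{m-1}$ telescope to $A(\sigma^{m+1} - \tau^{m+1})$. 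Dividing by $s-1$ and recalling $A = x_1/(\sigma - \tau)$ gives the second formula.

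The computation is essentially routine once the setup is right; the main thing to get correct is the bookkeeping at the junction --- counting that vertex $m$ has exactly $s-1$ clique neighbors, while each non-junction clique vertex sees $s-2$ of its clique-peers plus the junction --- together with spotting the identity $\lambda\sigma - 1 = \sigma^2$ that raises the exponent from $m$ to $m+1$ and makes the clique value emerge in the stated clean form. One may optionally verify consistency against the non-junction clique equation $(\lambda - s + 2)c = x_m$, which is automatically satisfied because $\lambda$ is the genuine eigenvalue; by simplicity of the Perron root, exhibiting one positive vector of the stated form satisfying every eigenequation suffices to identify it as the principal eigenvector.
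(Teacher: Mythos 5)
The paper offers no proof of this lemma --- it is imported verbatim from \cite{Cioaba} --- so there is no internal argument to compare against; your derivation is correct, complete, and is essentially the standard proof from the cited source: the path eigenequations give the recurrence $x_{k+1} = \lambda x_k - x_{k-1}$ with characteristic roots $\sigma,\tau$, the pendant condition $\lambda x_1 = x_2$ forces $B = -A$ exactly as you compute, and the junction equation $\lambda x_m = x_{m-1} + (s-1)c$ combined with $\lambda\sigma - 1 = \sigma^2$ and $\lambda\tau - 1 = \tau^2$ yields the exponent $m+1$ and the factor $\frac{1}{s-1}$. Your symmetry argument (Perron--Frobenius uniqueness plus the automorphisms permuting the $s-1$ non-junction clique vertices) for the common value $c$ is sound, as is the observation that the remaining clique equation $(\lambda - s + 2)c = x_m$ holds automatically since $x$ is a genuine eigenvector. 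Two points worth making explicit: division by $\sigma - \tau$ requires $\lambda > 2$, which holds for $s \geq 3$ by Lemma \ref{L:kite_radius} (and your closed form still covers the degenerate case $m = 2$, where the boundary equation alone gives $x_2 = (\sigma + \tau)x_1$); and the statement's $\sqrt{\lambda - 4}$ is a typo for $\sqrt{\lambda^2 - 4}$ (compare Theorem \ref{T:pen}), which your choice of the roots of $t^2 - \lambda t + 1$ silently and correctly repairs.
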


For simplicity we consider the kite whose path is of length 2 (i.e., $K_n$ with a pendant edge).

\begin{theorem}
Consider $P_2K_{n-1}$.  We have
\[
\lim_{n \to \infty} \gamma(P_2K_{n-1}) = \infty, \lim_{n \to \infty} c_e(P_2K_{n-1}) = 0, \lim_{n \to \infty} c_d(P_2K_{n-1}) = 0.
\]
\end{theorem}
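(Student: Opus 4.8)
The plan is to reduce all three limits to results already established — the spectral radius estimate of Lemma~\ref{L:kite_radius} and the upper bounds of Lemmas~\ref{L:Ce_bound} and~\ref{L:Cd_bound} — so that essentially no direct computation of the variances is needed. First I would record the structure of $G_n = P_2K_{n-1}$, a graph on $n$ vertices with three vertex orbits: the pendant vertex (degree $1$), the vertex $w$ where the path meets the clique (degree $n-1$), and the $n-2$ remaining clique vertices (each of degree $n-2$). By symmetry the principal eigenvector takes only three values; normalizing the pendant entry to $1$, the eigenequation at the pendant gives $x_w = \lambda$, and solving the two remaining eigenequations gives clique entry $\lambda/(\lambda - n + 3)$. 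This agrees with Lemma~\ref{L:kite_vec} specialized to $m = 2$, $s = n-1$.

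For the principal ratio I would use Lemma~\ref{L:kite_radius} with $s = n-1$, which gives $\lambda > n-2$. This forces $\lambda - n + 3 > 1$, so the clique entry is strictly less than $x_w = \lambda$; as all three values exceed the pendant's $1$, the minimum entry is $1$ and the maximum is $\lambda$. Hence $\gamma(G_n) = \lambda > n - 2 \to \infty$.

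For $c_e$ I would avoid computing the variance and instead apply Lemma~\ref{L:Ce_bound}: $c_e \le \frac{n}{\lambda+1} - 1 < \frac{n}{n-1} - 1 = \frac{1}{n-1}$, again using $\lambda > n-2$. Since $c_e \ge 0$, this squeezes to $0$. For $c_d$ I would use Lemma~\ref{L:Cd_bound}: $c_d \le \Delta/\overline{d} - 1$, where $\Delta = n-1$ and $|E| = \binom{n-1}{2} + 1$, giving $\overline{d} = 2|E|/n = (n^2 - 3n + 4)/n$; thus $\Delta/\overline{d} \to 1$, the bound tends to $0$, and $c_d \ge 0$ again forces $c_d \to 0$.

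These steps are computationally light; the only delicate point is that the two squeeze arguments work precisely because $\lambda = n - 2 + o(1)$, so I would be careful to invoke the correct (lower) endpoint of Lemma~\ref{L:kite_radius}, which alone suffices for both $\gamma$ and $c_e$. The remaining care is purely bookkeeping — counting degrees and edges correctly and confirming which orbit carries the extreme eigenvector entries — rather than any genuine obstacle.
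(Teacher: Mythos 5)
Your proposal is correct and follows essentially the same route as the paper: the paper likewise gets $\gamma(P_2K_{n-1}) = \frac{\sigma^2-\tau^2}{\sigma-\tau} = \sigma+\tau = \lambda > n-2$ from Lemmas \ref{L:kite_vec} and \ref{L:kite_radius}, and then kills $c_e$ and $c_d$ with exactly the bounds of Lemmas \ref{L:Ce_bound} and \ref{L:Cd_bound}, using the same $\Delta = n-1$ and $\overline{d} = (n^2-3n+4)/n$. Your only addition is deriving the three orbit values from the eigenequations and explicitly checking that the junction carries $x_{\max}$ and the pendant $x_{\min}$ --- a detail the paper leaves implicit (and which in any case is harmless, since either candidate for $x_{\max}$ tends to infinity against $x_{\min}=1$).
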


\begin{proof}
From Lemma \ref{L:kite_vec} we have
\[
\gamma(P_2K_{n-1}) = \frac{\sigma^2 - \tau^2}{\sigma-\tau} = \sigma + \tau > \sigma. 
\]
Appealing to Lemma \ref{L:kite_radius} we find $\lambda > n-2$ so that $\sigma > (n-2)/2$ hence 
\[
\lim \gamma(P_2K_{n-1}) = \infty.
\]
Similarly, we have from Lemma \ref{L:Ce_bound} that
\[
c_e(P_2K_{n-1}) \leq \frac{n}{\lambda + 1} - 1 \leq \frac{n}{n-1} - 1
\]
which yields $\lim c_e(P_2K_{n-1}) = 0$.  Finally, we have that $\Delta(P_2K_{n-1}) = n-1$ and 
\[
\overline{d} = \frac{n-1 + (n-2)^2 + 1}{n} = \frac{n^2-3n+4}{n}
\]
appealing to Lemma \ref{L:Cd_bound}
\[
\lim c_d(P_2K_{n-1}) \leq \lim \frac{n(n-1)}{n^2-3n+4} - 1 = 0.
\]
\end{proof}

\subsection{A kite whose head is a regular graph}
Let $G_n^r$ be an $r$-regular connected graph on $n$ labeled vertices.  Consider the graph $P_mG_n^r$ which is formed from identifying the $m$-vertex of $P_m$ with the $1$-vertex in $G_n^r$.  Two examples are given in Figure \ref{F:Reg_Kite}.

\begin{figure}[h]
    \centering
    \includegraphics[width=0.2\textwidth]{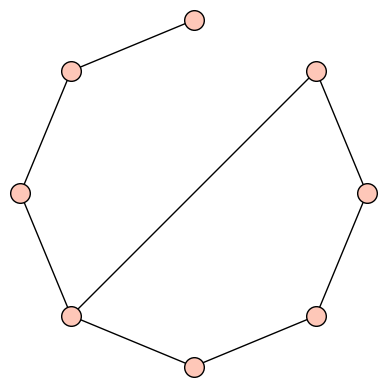}
    \includegraphics[width=0.2\textwidth]{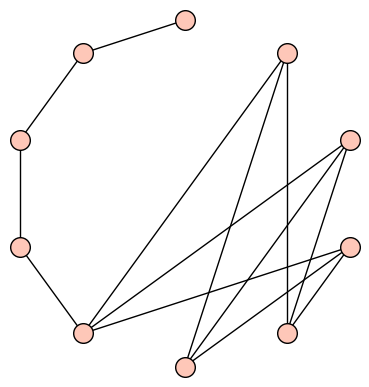}
    \caption{$P_4C_5$ and $P_5K_{3,3}$.}
    \label{F:Reg_Kite}
\end{figure}

\begin{theorem}
\label{T:Reg_Kite}
We have 
\[
\lim_{n \to \infty}c_d(P_nG_n^r) = \left(\frac{r-2}{r+2}\right)^2.
\]
\end{theorem}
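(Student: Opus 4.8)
The plan is to apply the identity $c_d = N\,\dfrac{||d||_2^2}{||d||_1^2} - 1$ from Lemma \ref{L:fact}, where $N$ is the number of vertices of $P_nG_n^r$, and then extract the leading-order behavior as $n \to \infty$. The key observation is that the whole argument reduces to bookkeeping the degree sequence, which depends only on $r$ and $n$ and not on the particular $r$-regular graph $G_n^r$ nor on the choice of glue vertex; this is exactly why the stated limit is a clean function of $r$ alone.

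First I would write down the degree sequence. Identifying the endpoint of $P_n$ with a vertex of $G_n^r$ produces a graph on $N = 2n-1$ vertices: one vertex of degree $1$ (the free end of the path), $n-2$ internal path vertices of degree $2$, one ``glue'' vertex of degree $r+1$ (its path edge together with its $r$ edges in the head), and the remaining $n-1$ vertices of $G_n^r$ of degree $r$. In the multiset notation of the paper,
\[
d = \big((1,1),\,(2,n-2),\,(r+1,1),\,(r,n-1)\big).
\]

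Next I would compute the two norms directly from this sequence. A routine count gives $||d||_1 = (r+2)n - 2$ and $||d||_2^2 = (r^2+4)n + 2r - 6$, both linear in $n$ with the two single-vertex contributions absorbed into the constant terms. Substituting into Lemma \ref{L:fact} with dimension $N = 2n-1$ yields
\[
c_d(P_nG_n^r) = (2n-1)\,\frac{(r^2+4)n + 2r - 6}{\big((r+2)n - 2\big)^2} - 1.
\]

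Finally I would pass to the limit. Since $r$ is fixed while $n \to \infty$, the leading terms dominate: the numerator of the fraction behaves like $2(r^2+4)n^2$ and the denominator like $(r+2)^2 n^2$, so the ratio tends to $\frac{2(r^2+4)}{(r+2)^2}$. Subtracting $1$ and simplifying the numerator via $2(r^2+4) - (r+2)^2 = (r-2)^2$ gives the claimed value $\left(\frac{r-2}{r+2}\right)^2$. The only point demanding care is keeping the two roles of ``$n$'' distinct — the path/head size $n$ versus the total vertex count $2n-1$ entering Lemma \ref{L:fact}; beyond this the computation is elementary and I expect no substantive obstacle.
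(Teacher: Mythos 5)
Your proposal is correct and follows essentially the same route as the paper: you identify the same degree multiset $d = ((1,1),(2,n-2),(r,n-1),(r+1,1))$ on $N=2n-1$ vertices, compute the same norms (your $\|d\|_1=(r+2)n-2$ squares to the paper's $\|d\|_1^2$, and your $\|d\|_2^2$ matches), apply the identity from Lemma \ref{L:fact}, and simplify via $2(r^2+4)-(r+2)^2=(r-2)^2$. No gaps; your explicit care about distinguishing the parameter $n$ from the vertex count $2n-1$ is a nice touch the paper leaves implicit.
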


\begin{proof}
By construction, 
\[
d = ((1,1), (2,n-2), (r, n-1), (r+1,1))
\]
so that
\[
||d||_2^2 = r^2n + 4n+2r-6 \text{ and } ||d||_1^2 = r^2n^2 + 4rn^2 + 4n^2 - 4rn-8n+4.
\]
We have by Equation \ref{E:formula}
\begin{align*}
\lim_{n \to \infty} c_d(P_nG_n^r) &= \lim_{n \to \infty}\frac{ (2n-1)(r^2n + 4n+2r-6)}{r^2n^2 + 4rn^2 + 4n^2 - 4rn-8n+4} - 1 \\
&= \frac{2(r^2+4)}{r^2+4r+4} -1 = \left(\frac{r-2}{r+2}\right)^2.
\end{align*}
\end{proof}

 We make use of the following Theorem.

\begin{theorem}
\label{T:pen}
\cite{Cioaba} Let $G$ be a connected graph of order $n$ with spectral radius $\lambda > 2$ and principal eigenvector $x$.  Let $d$ be the shortest distance from a vertex on which $x$ is maximum to a vertex on which it is minimum.  Then 
\[
\gamma(G) \leq \frac{\sigma^{d+1}-\tau^{d+1}}{\sigma - \tau}
\]
where $\sigma = \frac{1}{2}(\lambda + \sqrt{\lambda^2 - 4})$ and $\tau = \sigma^{-1}$.  
Equality is attained if and only if $G$ is regular or there is an induced path of length $d > 0$ whose endpoints index $x_{min}$ and $x_{max}$ and the degrees of the endpoints are $1$ and $3$ or more, respectively, while all other vertices of the path have degree 2 in $G$.
\end{theorem}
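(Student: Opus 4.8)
We sketch a proof (following \cite{Cioaba}). The plan is to compare the eigenvector values along a shortest $x_{\max}$-to-$x_{\min}$ path against the solution of the linear recurrence $f_{k+1} = \lambda f_k - f_{k-1}$, whose characteristic equation $t^2 - \lambda t + 1 = 0$ has roots exactly $\sigma$ and $\tau$. Write $P = (v_0, v_1, \ldots, v_d)$ for a shortest path with $x_{v_0} = x_{\max}$ and $x_{v_d} = x_{\min}$, and normalize $x_{\min} = 1$; by Perron--Frobenius every entry of $x$ is strictly positive. Since $P$ is a shortest path it is induced, so each interior vertex $v_k$ (for $1 \le k \le d-1$) has $v_{k-1}, v_{k+1}$ as its only path-neighbors, and its remaining neighbors lie off $P$ and contribute positively. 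Reading off the eigenequation $\lambda x_{v_k} = \sum_{j \sim v_k} x_j$ therefore gives $\lambda x_{v_k} \ge x_{v_{k-1}} + x_{v_{k+1}}$ at every interior vertex, while at the minimum endpoint $\lambda x_{v_d} \ge x_{v_{d-1}}$, since the sum contains the positive term $x_{v_{d-1}}$.

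Writing $s_j := x_{v_{d-j}}$, so that $s_0 = 1$ and $s_d = x_{\max} = \gamma$, these inequalities read $s_1 \le \lambda s_0$ and $s_{j+1} \le \lambda s_j - s_{j-1}$ for $1 \le j \le d-1$. The reference sequence $g_k := (\sigma^k - \tau^k)/(\sigma - \tau)$ satisfies the same recurrence with equality, $g_0 = 0$, $g_1 = 1$, $g_{k+1} = \lambda g_k - g_{k-1}$, and is positive and increasing for $\lambda > 2$, since then $\sigma > 1 > \tau > 0$. The naive attempt is to prove $s_j \le g_{j+1}$ termwise by induction, but this fails: the recurrence is second order and $-s_{j-1}$ enters with the wrong sign, so knowing $s_{j-1} \le g_j$ does not bound $\lambda s_j - s_{j-1}$ from above. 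This sign obstruction is the crux of the argument.

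I would resolve it by comparing consecutive \emph{ratios} rather than terms, a discrete Riccati/continued-fraction comparison. Set $\rho_j := s_{j+1}/s_j > 0$ and $R_j := g_{j+2}/g_{j+1} > 0$; dividing the two recurrences by $s_j$ and $g_{j+1}$ respectively gives $\rho_j \le \lambda - 1/\rho_{j-1}$ and $R_j = \lambda - 1/R_{j-1}$, with $\rho_0 = s_1 \le \lambda = R_0$. Because $t \mapsto \lambda - 1/t$ is increasing on $(0,\infty)$, a clean induction now yields $\rho_j \le R_j$ for all $j$: if $\rho_{j-1} \le R_{j-1}$ then $1/\rho_{j-1} \ge 1/R_{j-1}$, whence $\rho_j \le \lambda - 1/\rho_{j-1} \le \lambda - 1/R_{j-1} = R_j$. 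Telescoping the product of ratios then gives $\gamma = s_d = s_0 \prod_{j=0}^{d-1} \rho_j \le \prod_{j=0}^{d-1} R_j = g_{d+1}/g_1 = (\sigma^{d+1} - \tau^{d+1})/(\sigma - \tau)$, which is the claimed bound.

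For the equality characterization I would trace which inequalities must be tight. Equality in the telescoped product forces $\rho_j = R_j$ for every $j$, hence $s_1 = \lambda s_0$ and $s_{j+1} = \lambda s_j - s_{j-1}$ throughout; equivalently the minimum endpoint has no off-path neighbor ($\deg v_d = 1$), every interior vertex has none ($\deg v_k = 2$), and the path is induced. It then remains to pin down the maximum endpoint. A leaf cannot be a maximum, since $\lambda x_{v_0} = x_{v_1}$ with $\lambda > 2$ forces $x_{v_0} < x_{v_1}$; and if $\deg v_0 = 2$ with off-path neighbor $z$, then $\lambda x_{v_0} = x_{v_1} + x_z$ with $x_{v_1} < x_{v_0}$ forces $x_z > (\lambda - 1) x_{v_0} > x_{v_0}$, again contradicting maximality. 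Hence $\deg v_0 \ge 3$, recovering the stated degree pattern along the path. Finally, when $d = 0$ the bound reads $g_1 = 1$ and equality means $x_{\max} = x_{\min}$, so $x$ is constant and $G$ is regular; conversely a regular graph has constant $x$ and $d = 0$, which accounts for the two equality cases.
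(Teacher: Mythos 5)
Your proof is essentially correct, but note that there is nothing in the paper to compare it against: Theorem \ref{T:pen} is imported verbatim from \cite{Cioaba} and the paper gives no proof of it. Your argument is in substance the standard one from that source: along a shortest min--max path one reads off $\lambda x_{v_d} \geq x_{v_{d-1}}$ and $\lambda x_{v_k} \geq x_{v_{k-1}} + x_{v_{k+1}}$ at interior vertices, and compares with the Chebyshev-like sequence $g_k = (\sigma^k - \tau^k)/(\sigma - \tau)$, which satisfies the recurrence with equality. Your discrete Riccati induction on ratios $\rho_j \leq R_j$ is a correct way to defeat the sign obstruction you identify, and it is equivalent to the cross-product induction $s_{j+1}g_{j+1} \leq s_j g_{j+2}$ (base case $s_1 \leq \lambda = g_2$), which telescopes to the same bound $\gamma \leq g_{d+1}$; either form is fine, and since all quantities are positive, equality in the telescoped product does force termwise equality, as you use.

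Two minor points. First, your equality analysis establishes only necessity; the ``if'' direction of the characterization, while immediate, deserves a sentence: when the stated structure is present, the leaf and degree-$2$ conditions turn every inequality into an equality, so $s_j = g_{j+1}$ exactly (same recurrence, $s_0 = g_1 = 1$, $s_1 = \lambda = g_2$), and since the hypothesis grants that the endpoints index $x_{\min}$ and $x_{\max}$, this yields $\gamma = g_{d+1}$. Second, your case analysis ruling out $\deg(v_0) \in \{1,2\}$ is correct but can be compressed: at any vertex $u$ attaining $x_{\max}$, the eigenequation gives $\lambda x_u \leq \deg(u)\, x_u$, so $\deg(u) \geq \lambda > 2$, i.e., $\deg(v_0) \geq 3$ holds automatically whenever $\lambda > 2$, independent of the equality assumption.
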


\begin{theorem}
Fix $r \geq 4$.  Then
\[
\lim_{n \to \infty}\gamma(P_nG_n^r) = \infty.
\]
\end{theorem}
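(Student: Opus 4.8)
The plan is to obtain a lower bound on $\gamma(P_nG_n^r)$ that already diverges, rather than to pin down $x_{\max}$ and $x_{\min}$ exactly. Write $(\lambda,x)$ for the principal eigenpair and label the path vertices $1,\dots,n$ from the pendant tip to the vertex of attachment, so that (as $P_n$ has $n$ vertices) vertex $n$ is identified with a vertex of $G_n^r$. Since $G_n^r$ sits inside $P_nG_n^r$ as an induced subgraph, monotonicity of the spectral radius under taking subgraphs gives $\lambda \geq \lambda(G_n^r) = r \geq 4 > 2$, so I may set $\sigma = \tfrac12(\lambda+\sqrt{\lambda^2-4})$ and $\tau = \sigma^{-1}$ with $\sigma$ real and $\sigma>1$.

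First I would record that the eigenvector entries along the pendant path obey exactly the same boundary condition and second-order recurrence as in Lemma \ref{L:kite_vec}: the degree-$1$ tip gives $\lambda x_1 = x_2$ and each interior (degree-$2$) path vertex gives $\lambda x_k = x_{k-1}+x_{k+1}$. These equations are insensitive to the structure of the head, so the kite formula transfers verbatim and $x_k = \frac{\sigma^k-\tau^k}{\sigma-\tau}\,x_1$ for $1 \leq k \leq n$; in particular the attachment vertex satisfies $x_n/x_1 = \frac{\sigma^n-\tau^n}{\sigma-\tau}$. (Equivalently, this is the value delivered by the equality case of Theorem \ref{T:pen} for the induced tip-to-attachment path of length $n-1$.)

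Next, because $x_1$ and $x_n$ are two coordinates of $x$, we trivially have $x_{\min} \leq x_1$ and $x_{\max} \geq x_n$, whence
\[
\gamma(P_nG_n^r) = \frac{x_{\max}}{x_{\min}} \geq \frac{x_n}{x_1} = \frac{\sigma^n-\tau^n}{\sigma-\tau}.
\]
This sidesteps the delicate question of exactly which head vertex attains $x_{\max}$, which is where invoking the full equality clause of Theorem \ref{T:pen} would otherwise force a separate argument. To finish I would make the bound uniform in $n$: since $4 \leq r \leq \lambda \leq \Delta = r+1$ and $\lambda \mapsto \sigma$ is increasing, we have $\sigma \geq 2+\sqrt3 > 1$, while $\sigma-\tau = \sqrt{\lambda^2-4} \leq \sqrt{(r+1)^2-4}$ is bounded by a constant independent of $n$. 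Hence $\gamma(P_nG_n^r) \geq \frac{(2+\sqrt3)^n-1}{\sqrt{(r+1)^2-4}} \to \infty$.

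The one point requiring care is this uniformity: $\sigma$ and $\tau$ both depend on $n$ through $\lambda$, so I must keep $\sigma$ bounded away from $1$ (via $\lambda \geq r$) and $\sigma-\tau$ bounded above (via $\lambda \leq r+1$) simultaneously, after which the exponential factor $\sigma^n$ dominates and the limit is immediate. I expect the only other step needing an explicit remark is the observation that the kite eigenvector formula of Lemma \ref{L:kite_vec} survives replacing the clique head by an arbitrary $r$-regular head, precisely because only the tip and interior-path eigenequations enter its derivation.
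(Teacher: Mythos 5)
Your proof is correct, and it takes a genuinely more self-contained route than the paper. The paper asserts outright that $x_{\min} = x_1$ and $x_{\max} = x_n$ and then invokes the \emph{equality} clause of Theorem \ref{T:pen} to write $\gamma(P_nG_n^r) = \frac{\sigma^{n+1}-\tau^{n+1}}{\sigma-\tau} \geq \sigma^n$, concluding since $\sigma > 1$. You instead re-derive the geometric growth $x_k = \frac{\sigma^k-\tau^k}{\sigma-\tau}x_1$ directly from the tip and interior-vertex eigenequations (correctly observing these are head-independent, so Lemma \ref{L:kite_vec} transfers) and then use only the trivial bound $\gamma \geq x_n/x_1$. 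This sidesteps the location-of-extremes question entirely, which is a real advantage: as the explicit kite eigenvector in Lemma \ref{L:kite_vec} shows, the maximum entry can sit on a head vertex rather than at the attachment vertex, so the paper's ``clearly $x_{\max} = x_n$'' is actually the delicate point of its own argument (and is in tension with its use of exponent $n+1$, which corresponds to distance $d = n$). You also handle a uniformity issue the paper leaves implicit: since $\lambda$ depends on $n$, one needs $\sigma$ bounded away from $1$ and $\sigma - \tau$ bounded above uniformly, which you get from $r \leq \lambda \leq \Delta = r+1$ (your $\lambda \geq \lambda(G_n^r) = r$ is in fact sharper than the paper's stated $\lambda \geq r-1$). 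Two small remarks: your parenthetical appeal to the equality case of Theorem \ref{T:pen} is unnecessary and not quite equivalent (that theorem speaks of $x_{\max}/x_{\min}$, not $x_n/x_1$, so it only applies once the extremes are located) --- your recurrence derivation is the argument that actually works; and note the formula for $x_n$ uses the degree-$2$ eigenequation at vertex $n-1$, valid for $n \geq 3$, which is harmless in the limit.
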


\begin{proof}
Let $(\lambda, x)$ be the principal eigenpair of $P_nG_n^r$.  Since $G_n^r$ is an induced subgraph of $P_nG_n^r$ we have that $\lambda \geq r-1$.  Clearly $x_{\text{min}} = x_1$ and $x_\text{max} = x_n$.  As $P_nG_n^r$ satisfies the conditions of Theorem \ref{T:pen} we have that 
\[
\gamma(P_nG_n^r) = \frac{\sigma^{n+1}-\tau^{n+1}}{\sigma - \tau} \geq \sigma^n.
\]
Whence $r \geq 4$ we have that $\sigma > 1$ so that
\[
\lim_{n \to \infty} \gamma(P_nG_n^r) = \infty.
\]
\end{proof}

\subsection{The star}

We now consider the star graph $K_{1,n}$.

\begin{theorem}
\label{L:star_limits}
We have
\[
\lim_{n \to \infty} \gamma(K_{1,n})^2 -1 = \infty,\; \lim_{n \to \infty} c_e(K_{1,n}) = 1, \; \text{ and } \lim_{n \to \infty} c_d(K_{1,n}) = \infty.
\]
\end{theorem}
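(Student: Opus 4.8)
The plan is to exploit the two-fold symmetry of the star: the centre forms one orbit and the $n$ leaves form a second orbit, so the principal eigenvector takes only two distinct values. First I would write $x = ((a,1),(b,n))$, with $a$ the centre entry and $b$ the common leaf entry, and record the eigenequations $\lambda a = nb$ and $\lambda b = a$. Eliminating $a$ gives $\lambda^2 = n$, hence $\lambda = \sqrt{n}$ and $a = \sqrt{n}\,b$. Normalising $b = 1$ yields $a = \sqrt{n}$, so $x_{\text{max}} = a$, $x_{\text{min}} = b$, and therefore $\gamma(K_{1,n}) = \sqrt{n}$. This immediately gives $\gamma^2 - 1 = n - 1 \to \infty$, the first claim.

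For $c_e$ I would not appeal to the bound of Lemma \ref{L:Ce_bound} (which furnishes only an upper estimate) but compute exactly via Equation \ref{E:formula}. With the normalisation above, $||x||_2^2 = a^2 + n = 2n$ and $||x||_1^2 = (a + n)^2 = (\sqrt{n}+n)^2$. Substituting into $c_e = (n+1)\,||x||_2^2/||x||_1^2 - 1$ produces a ratio whose numerator and denominator are both of order $n^2$, with leading coefficients $2$ and $1$ respectively; the ratio therefore tends to $2$, and $c_e \to 2 - 1 = 1$, the second claim.

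For $c_d$ the degree vector is immediate: the centre has degree $n$ and each leaf has degree $1$, so $d = ((n,1),(1,n))$. Then $||d||_1^2 = (2n)^2 = 4n^2$ and $||d||_2^2 = n^2 + n$. Applying Equation \ref{E:formula} gives $c_d = (n+1)(n^2+n)/(4n^2) - 1 = (n+1)^2/(4n) - 1$, which diverges. Here it is essential to compute $c_d$ exactly rather than to bound it, since divergence is a lower-bound statement that Lemma \ref{L:Cd_bound} cannot supply.

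There is no genuine obstacle here: the symmetry makes every relevant quantity available in closed form, and all three limits reduce to comparing leading powers of $n$. The only point requiring slight care is the choice of method. The upper bounds of the Background section dispatch $\gamma^2 - 1 \to \infty$ trivially, but they are useless for pinning $c_e$ to the exact value $1$ and for establishing $c_d \to \infty$; for those two the exact identity of Lemma \ref{L:fact} must be invoked. I note also that although $K_{1,n} = S(1,n)$, it does not fall under the earlier family $S(n,kn)$ (which fixes $k$ and sends $n \to \infty$), so the direct computation is the cleanest route.
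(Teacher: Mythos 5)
Your proposal is correct and follows essentially the same route as the paper: determine the two-valued principal eigenvector by symmetry, then compute $\gamma$, $c_e$, and $c_d$ exactly via Equation \ref{E:formula}, with only a cosmetic difference in normalisation (you set the leaf entry to $1$, giving $x_{\max}=\sqrt{n}$, while the paper scales so $x_{\max}=1$ and leaves have entry $n^{-1/2}$ --- immaterial since the coefficient of variation is scale-invariant). Your derivation of the eigenvector from the eigenequations, rather than citing it as well known, and your explicit remark that exact computation (not the upper bounds of Lemmas \ref{L:Ce_bound} and \ref{L:Cd_bound}) is needed for the last two limits, are both sound and consistent with the paper's argument.
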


\begin{proof}
Let $K_{1,n}$ denote the star with $n$ rays.  It is well known that
\[
x = ((1,1), (n^{-1/2},n)) \text{ and } d = ((n,1),(1,n)).
\]
Clearly
\[
\lim_{n \to \infty} \gamma(K_{1,n})^2-1 = \lim_{n \to \infty} n-1 = \infty.
\]
We have that $||d||_1^2 = 4n^2$ and $||d||_2^2 = n^2 + n$ so that
\[
c_d(K_{1,n}) = (n+1) \frac{||d||_2^2}{||d||_1^2} - 1 = \frac{n^3+2n^2+n}{4n^2}-1
\]
and $\lim_nc_d(K_{1,n}) = \infty$.

Moreover,
\[
||x||_1^2 = (1 + n^{-1/2})^2 \text{ and } ||x||_2^2 = 2.
\]
As such 
\[
c_e(K_{1,n}) = (n+1)\frac{||x||_2^2}{||x||_1^2} - 1 = \frac{2(n+1)}{(n^{1/2}+1)^2}-1
\]
so that $\lim_{n \to \infty} c_e(K_{1,n}) = 1$.
\end{proof}

\subsection{Cartesian powers of a graph}


Given two graphs $A$ and $B$, the Cartesian product $A\square B$ is the graph on $|V(A)||V(B)|$ vertices where $V(A \square B) = \{(a,b): a \in V(A), b \in V(B)\}$ and $(a,b)$ is incident to $(a',b')$ if and only if $(a,a') \in E(A)$ and $b = b'$ or  $(b,b') \in E(B)$ and $a = a'$.  An example is given in Figure \ref{F:Cart}.  We begin by presenting a folkloric result.

\begin{figure}
\begin{center}
\includegraphics[scale=.5]{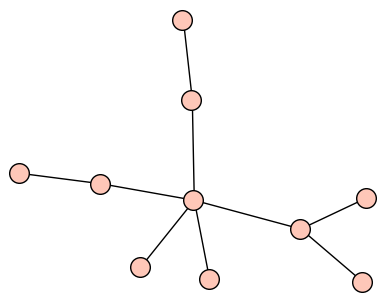}
\includegraphics[scale=.4]{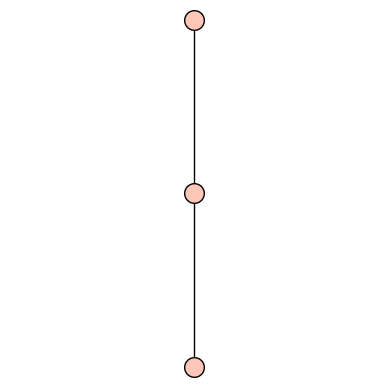}
\includegraphics[scale=.5]{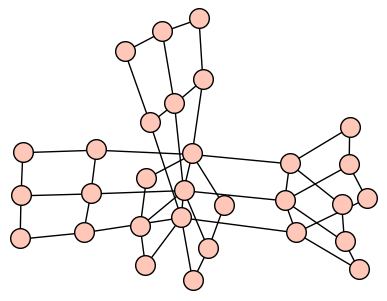}
\caption{A randomly generated BA(10,1) graph \cite{Barabasi}, $P_3$, and their Cartesian product, respectively.}
\label{F:Cart}
\end{center}
\end{figure}

\begin{lemma}
\label{L:CPEigen}
Let $(\lambda, x)$ and $(\rho, y)$ be the principal eigenvectors of $A$ and $B$, respectively.  Let $z \in \mathbb{R}^{|A||B|}$ where $z = x \otimes y$ that is $z_{ab} = x_ay_b$.  Then $(\lambda + \rho, z)$ is the principal eigenpair of $A \square B$.
\end{lemma}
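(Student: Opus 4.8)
The plan is to use the Kronecker-product description of the adjacency matrix of a Cartesian product and then invoke Perron--Frobenius to upgrade ``an eigenpair'' to ``the principal eigenpair.'' Write $A$ and $B$ also for the adjacency matrices of the two factors, and identify $\mathbb{R}^{|A||B|}$ with $\mathbb{R}^{|A|}\otimes\mathbb{R}^{|B|}$ so that the vertex $(a,b)$ corresponds to $e_a\otimes e_b$. Under this identification the incidence rule in the statement --- $(a,b)\sim(a',b')$ exactly when $a=a'$ and $b\sim b'$, or $b=b'$ and $a\sim a'$ --- translates directly into the matrix identity $A_{A\square B}=A\otimes I_{|B|}+I_{|A|}\otimes B$, which I would first record by checking it entrywise against that rule.

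Next I would verify that $(\lambda+\rho,z)$ is an eigenpair. Using the mixed-product property $(P\otimes Q)(u\otimes v)=(Pu)\otimes(Qv)$ together with $z=x\otimes y$, a one-line computation gives
\[
(A\otimes I+I\otimes B)(x\otimes y)=(Ax)\otimes y+x\otimes(By)=\lambda\,x\otimes y+\rho\,x\otimes y=(\lambda+\rho)\,z,
\]
so $z$ is an eigenvector of $A\square B$ with eigenvalue $\lambda+\rho$.

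The remaining and genuinely essential step is to argue that this is the \emph{principal} eigenpair rather than merely some eigenpair. Here I would appeal to Perron--Frobenius. Since $A$ and $B$ are connected, their adjacency matrices are irreducible, so $x$ and $y$ may be taken strictly positive; hence $z=x\otimes y$ is entrywise positive. The Cartesian product of connected graphs is connected, so $A\square B$ is irreducible as well, and a nonnegative irreducible matrix has, up to scaling, a unique positive eigenvector, namely the one belonging to its spectral radius. As $z>0$ is an eigenvector of $A\square B$, it must be that Perron vector and $\lambda+\rho$ must equal $\lambda(A\square B)$.

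I expect the only real subtlety to be this last point: establishing that $\lambda+\rho$ is the \emph{largest} eigenvalue and not just an eigenvalue. The positivity-plus-irreducibility argument handles it cleanly; alternatively one could note that $\{u_i\otimes v_j\}$, ranging over eigenbases of $A$ and $B$, forms an eigenbasis of $A\square B$ with eigenvalues $\{\lambda_i+\rho_j\}$, whence $\lambda+\rho$ is maximal because $\lambda=\max_i\lambda_i$ and $\rho=\max_j\rho_j$. The Perron--Frobenius route is shorter and avoids invoking the full spectral decomposition, so I would present that and mention the decomposition only as a remark.
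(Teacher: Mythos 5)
Your proof is correct, and in fact more complete than the paper's. The paper verifies the lemma by a direct coordinatewise computation: it expands $(A \square B)z$ at a vertex $(a,b)$ as a sum over the neighbors $a' \in N_A(a)$ and $b' \in N_B(b)$ and regroups to obtain $(\lambda+\rho)z_{ab}$ --- substantively the same verification you carry out in one line via $A_{A\square B} = A\otimes I + I\otimes B$ and the mixed-product property, just without the Kronecker formalism (the paper's displayed computation even contains spurious factors of $z_{ab}$ in its first lines, though the intended argument is clear). Where you genuinely go beyond the paper is the step you flag as the essential subtlety: the paper stops after showing $(\lambda+\rho, z)$ is \emph{an} eigenpair and never argues that $\lambda+\rho$ is the spectral radius, i.e., that the pair is \emph{principal}. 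Your Perron--Frobenius argument closes this gap cleanly: $z = x\otimes y$ is entrywise positive since $x,y$ are, the Cartesian product of connected graphs is connected so $A\square B$ is irreducible, and an irreducible nonnegative matrix has, up to scaling, a unique positive eigenvector, which belongs to the spectral radius. Your alternative remark --- that $\{u_i \otimes v_j\}$ over eigenbases of $A$ and $B$ diagonalizes $A\square B$ with spectrum $\{\lambda_i + \rho_j\}$, so $\lambda+\rho$ is maximal --- is equally valid. Either of these closing arguments supplies exactly the justification the paper leaves tacit (it implicitly relies on positivity of $z$ without saying so), so your version is the one that fully establishes the statement as written.
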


\begin{proof}
We have 
\begin{align*}
(A\square B)z_{ab} &= \left(\sum_{a' \in N_A(a)} z_{a'b} + \sum_{b' \in N_B(b)} a_{ab'}\right)z_{ab} \\
&= \sum_{a' \in N_A(a)} z_{a'b}z_{ab} + \sum_{b' \in N_B(b)} z_{ab'}z_{ab} \\
&= \sum_{a' \in N_A(a)} (x_{a'}y_b)(x_ay_b) + \sum_{b' \in N_B(b)} (x_ay_{b'})(x_ay_b) \\
&= y_b\left(\sum_{a' \in N_A(a)} x_{a'}\right)x_a + x_a\left(\sum_{b' \in N_B(b)} y_{b'}\right) y_b \\
&=y_b Ax_a + x_a By_b = y_b(\lambda x_a) + x_a(\rho y_b) \\
&= (\lambda + \rho)x_ay_b = (\lambda + \rho)z_{ab}.
\end{align*}
\end{proof}

We show that $c_e(G^{\square k})$ has the following form.

\begin{lemma}
\label{L:product}
We have
\[
c_e(A \square B) = c_e(A)c_e(B) + c_e(A) + c_e(B).
\]
\end{lemma}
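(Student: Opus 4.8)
The plan is to work directly from the two lemmas already established, treating the statement as a short computation rather than anything structural. First I would invoke Lemma~\ref{L:CPEigen} to write the principal eigenvector of $A \square B$ as the tensor product $z = x \otimes y$, where $x$ and $y$ are the principal eigenvectors of $A$ and $B$ and $z_{ab} = x_a y_b$. The entire argument then reduces to evaluating the two norms appearing in the formula of Lemma~\ref{L:fact} and recognizing a multiplicative structure.

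Second, I would exploit the fact that both $||\cdot||_2^2$ and $||\cdot||_1^2$ factor across the tensor product. Writing $n_A = |V(A)|$ and $n_B = |V(B)|$, summing $z_{ab}^2 = x_a^2 y_b^2$ over all pairs $(a,b)$ separates into a product of sums, and likewise for $z_{ab} = x_a y_b$; hence
\[
||z||_2^2 = ||x||_2^2\,||y||_2^2 \quad\text{and}\quad ||z||_1^2 = ||x||_1^2\,||y||_1^2.
\]
Thus the ratio $||z||_2^2/||z||_1^2$ is itself the product of the corresponding ratios for $A$ and $B$.

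Third, and this is the one mildly non-obvious observation, I would note that the quantity which behaves multiplicatively is not $c_e$ itself but $c_e + 1$. Rearranging Equation~\ref{E:formula} gives $n_A\,||x||_2^2/||x||_1^2 = c_e(A)+1$ and the analogous identity for $B$. Applying Lemma~\ref{L:fact} to $A \square B$, which has $n_A n_B$ vertices, gives $c_e(A \square B) + 1 = n_A n_B\,||z||_2^2/||z||_1^2$, so the factorizations above combine to yield $c_e(A \square B) + 1 = (c_e(A)+1)(c_e(B)+1)$. Expanding the product and subtracting $1$ produces exactly $c_e(A)c_e(B) + c_e(A) + c_e(B)$, as claimed.

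Since the proof is a brief calculation, there is no genuine obstacle; the only point demanding care is the bookkeeping that the ambient dimension for $A \square B$ is the product $n_A n_B$. It is precisely this that makes the three factors of $n$ align, so that the shift by $1$ becomes the multiplicative quantity and the clean product formula emerges.
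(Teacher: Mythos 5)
Your proposal is correct and follows the paper's proof essentially step for step: both invoke Lemma~\ref{L:CPEigen} to obtain $z = x \otimes y$, factor $\lVert z\rVert_2^2/\lVert z\rVert_1^2$ into the product of the corresponding ratios for $A$ and $B$, and then observe via Equation~\ref{E:formula} that $c_e + 1$ is the multiplicative quantity, with the vertex count $|V(A)||V(B)|$ splitting across the two factors. No gaps; nothing to add.
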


\begin{proof}
Let $(\lambda, x)$ and $(\rho, y)$ be the principal eigenpair of $A$ and $B$, respectively. From Lemma \ref{L:CPEigen} we have that $z = x \otimes y$ is the principal eigenvector of $A \square B$.  Note that
\[
\frac{||z||_2^2}{||z||_1^2} = \frac{\sum (x_ay_b)^2}{(\sum x_ay_b)^2} = \frac{(\sum x_a^2)(\sum y_b^2)}{(\sum x_a)^2(\sum  y_b)^2} = \frac{||x||_2^2 ||y||_2^2}{||x||_1^2||y||_1^2}.
\]

So that
\begin{align*}
c_e(A \square B) &= |V(A)||V(B)|\left(\frac{||x||_2^2 ||y||_2^2}{||x||_1^2||y||_1^2}\right) - 1 \\
&= \left(|V(A)|\frac{||x||_2^2}{||x||_1^2}\right)\left(|V(B)|\frac{||y||_2^2}{||y||_1^2}\right) - 1\\
&= (c_e(A)+1)(c_e(B)+1) - 1 \\
&= c_e(A)c_e(B) + c_e(A) + c_e(B).
\end{align*}
\end{proof}

We now consider $c_d(G^{\square k})$.  We adopt the notation $A+B = \{a + b : a \in A, b \in B\}$ to be the \textit{multiset sum}.  For convenience, we write $kA = \sum_{i=1}^k A$.  Note that $|kA| = |A|^k$.  We make use of the following facts.

\begin{lemma}
Let $A \subset \mathbb{Z}^+$ be a multiset.  Then 
\[
||kA||_1 = k|A|^{k-1}||A||_1.
\]
\end{lemma}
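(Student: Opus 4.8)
The plan is to compute $\|kA\|_1$ directly as a sum over the $k$-fold Cartesian index set, exploiting the symmetry of the coordinates. Since $A \subset \mathbb{Z}^+$, the quantity $\|A\|_1$ is just the sum of the (positive) elements of $A$ counted with multiplicity, and the multiset $kA = \underbrace{A + \cdots + A}_{k}$ consists of every ordered sum $a_1 + \cdots + a_k$ with each $a_i$ ranging over $A$ with multiplicity; in particular $|kA| = |A|^k$. Thus
\[
\|kA\|_1 = \sum_{(a_1,\dots,a_k) \in A^k} (a_1 + \cdots + a_k).
\]

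First I would interchange the two summations and write this as $\sum_{j=1}^k \sum_{(a_1,\dots,a_k) \in A^k} a_j$. For each fixed coordinate $j$, summing $a_j$ while letting the remaining $k-1$ coordinates range freely over $A$ contributes a factor of $|A|^{k-1}$, so that $\sum_{(a_1,\dots,a_k)} a_j = |A|^{k-1}\,\|A\|_1$ independently of $j$. Summing over the $k$ choices of $j$ then yields $\|kA\|_1 = k\,|A|^{k-1}\,\|A\|_1$, as claimed.

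Alternatively, one could argue by induction on $k$, using the product rule $\|B+C\|_1 = |C|\,\|B\|_1 + |B|\,\|C\|_1$ (itself a one-line consequence of expanding $\sum_{b,c}(b+c)$). Taking $B = kA$ and $C = A$, the inductive hypothesis $\|kA\|_1 = k\,|A|^{k-1}\,\|A\|_1$ gives $\|(k+1)A\|_1 = |A|\cdot k\,|A|^{k-1}\,\|A\|_1 + |A|^k\,\|A\|_1 = (k+1)\,|A|^k\,\|A\|_1$, completing the step. I expect no genuine obstacle here: the only point requiring care is the bookkeeping of multiplicities, namely that $kA$ is indexed by the full ordered product $A^k$ (so repeated elements of $A$, and repeated values of the sums, are all retained), which is precisely what makes the factor $|A|^{k-1}$ emerge cleanly.
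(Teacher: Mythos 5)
Your proof is correct, and your primary argument takes a genuinely different route from the paper. The paper proceeds by induction on $k$: it peels off the last coordinate, writes $\sum_{i_1}\cdots\sum_{i_k}(a_{i_1}+\cdots+a_{i_k})$ as $|A|$ times the $(k-1)$-fold sum plus $|A|^{k-1}\|A\|_1$, and invokes the inductive hypothesis. You instead compute the sum directly in one step by interchanging the order of summation: each of the $k$ coordinates contributes $|A|^{k-1}\|A\|_1$ by symmetry, giving the factor $k$ immediately. This is arguably cleaner --- it eliminates the induction entirely and makes transparent where each factor comes from, whereas the paper's inductive computation contains some notational slips (e.g.\ writing $A^{k-1}\|A\|_1$ for $|A|^{k-1}\|A\|_1$) that your direct count avoids. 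Your alternative inductive argument via the bilinear product rule $\|B+C\|_1 = |C|\,\|B\|_1 + |B|\,\|C\|_1$ is essentially the paper's proof in disguise, but packaging the induction step as a general two-multiset identity is a nice abstraction: it is exactly the structural fact the paper's coordinate-peeling manipulation verifies by hand, and it would also streamline the companion lemma on $\|kA\|_2^2$. Your emphasis on the multiplicity bookkeeping --- that $kA$ is indexed by the full ordered product $A^k$, so $|kA|=|A|^k$ --- is the right point of care and is consistent with how the paper uses the lemma in Lemma~\ref{L:degcart}.
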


\begin{proof}
The case of $k=1$ is trivial.  Proceeding by induction we find
\begin{align*}
    ||kA||_1 &= \sum_{i_1}\dots \sum_{i_k}(a_{i_1} + \dots + a_{i_k}) \\
    &= \sum_{i_1} \dots \sum_{i_{k-1}}\left(|A| (a_{i_1} + \cdots + a_{k-1}) + ||A||_1\right) \\
    &= |A| \left(\sum_{i_1} \dots \sum_{i_{k-1}} a_{i_1} + \cdots + a_{k-1}\right) + |A|^{k-1}||A||_1 \\
    &= |A|((k-1)|A|^{k-2}||A||_1) + A^{k-1}||A||_1 = k|A|^{k-1}||A||_1.
\end{align*}
\end{proof}

\begin{lemma}
Let $A \subset \mathbb{Z}^+$ be a multiset.  Then
\[
||kA||_2^2 = k|A|^{k-2}(|A| \cdot ||A||_2^2 + (k-1)||A||_1^2)
\]
\end{lemma}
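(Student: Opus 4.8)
The plan is to compute $||kA||_2^2$ directly from the definition
\[
||kA||_2^2 = \sum_{i_1} \cdots \sum_{i_k} (a_{i_1} + \cdots + a_{i_k})^2
\]
by expanding the squared sum and splitting it into diagonal and cross terms. Writing $n = |A|$ for brevity, the expansion is
\[
(a_{i_1} + \cdots + a_{i_k})^2 = \sum_{j=1}^k a_{i_j}^2 + \sum_{\substack{1 \le j, l \le k \\ j \ne l}} a_{i_j} a_{i_l},
\]
and the idea is to interchange the order of summation so that the sum over all $k$-tuples is carried out term by term.

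For each fixed $j$, summing $a_{i_j}^2$ over all $k$-tuples splits as a sum over the single index $i_j$, which contributes $||A||_2^2$, times the free sums over the remaining $k-1$ indices, each of which contributes a factor of $n$. Since there are $k$ choices of $j$, the diagonal part totals $k\, n^{k-1}\, ||A||_2^2$. For the cross terms, for each fixed ordered pair $(j,l)$ with $j \ne l$ the sums over $i_j$ and $i_l$ each yield $||A||_1$, while each of the remaining $k-2$ indices contributes a factor $n$; as there are $k(k-1)$ such ordered pairs, the off-diagonal part totals $k(k-1)\, n^{k-2}\, ||A||_1^2$. Adding the two contributions and factoring out $k\, n^{k-2}$ gives the claimed identity.

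The main obstacle is purely bookkeeping: correctly counting the diagonal versus off-diagonal index-pairs (namely $k$ of the former and $k(k-1)$ ordered pairs of the latter) and tracking the correct power of $n = |A|$ coming from the free indices in each case. As an alternative that mirrors the previous lemma, one could instead induct on $k$ via $(k+1)A = kA + A$, expanding
\[
||(k+1)A||_2^2 = |A|\,||kA||_2^2 + 2\,||kA||_1\,||A||_1 + |A|^k\, ||A||_2^2
\]
and substituting both the inductive hypothesis and the preceding lemma $||kA||_1 = k|A|^{k-1}||A||_1$; in that route the obstacle shifts to the algebraic simplification, which the direct expansion sidesteps.
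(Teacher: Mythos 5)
Your proof is correct, and it takes a genuinely different route from the paper's. You expand $(a_{i_1}+\cdots+a_{i_k})^2$ over all $k$-tuples in one pass and count contributions directly: each of the $k$ diagonal positions contributes $|A|^{k-1}||A||_2^2$ (the other $k-1$ indices are free), and each of the $k(k-1)$ ordered off-diagonal pairs contributes $|A|^{k-2}||A||_1^2$; your bookkeeping checks out, since $k|A|^{k-1}||A||_2^2 + k(k-1)|A|^{k-2}||A||_1^2 = k|A|^{k-2}\left(|A|\cdot||A||_2^2+(k-1)||A||_1^2\right)$ is exactly the claim. The paper instead argues by induction on $k$: it peels off the last summand, writing the square as the $(k-1)$-fold square plus the correction $a_{i_k}\left(2(a_{i_1}+\cdots+a_{i_{k-1}})+a_{i_k}\right)$, and then invokes the preceding lemma $||kA||_1 = k|A|^{k-1}||A||_1$ together with the inductive hypothesis --- this is precisely the alternative you sketch at the end, and your displayed recursion $||(k+1)A||_2^2 = |A|\,||kA||_2^2 + 2||kA||_1||A||_1 + |A|^k||A||_2^2$ is the paper's recursion up to reindexing, so you have in effect supplied both proofs. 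Your direct double-count buys a closed-form derivation with no induction and no reliance on the $\ell_1$ lemma (only the trivial $\sum_i a_i = ||A||_1$), and it handles the edge case $k=1$ uniformly, where the cross terms simply vanish; the paper's inductive route buys structural parallelism with its proof of the $\ell_1$ formula, at the cost of a heavier algebraic simplification --- indeed the middle of the paper's printed chain of equalities is garbled (a term is dropped between lines), a hazard your cleaner expansion sidesteps entirely.
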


\begin{proof}
The case of $k = 1,2,3$ are clear.  We further have that
\begin{align*}
||kA||_2^2 &= \sum_{i_1}\dots \sum_{i_k} (a_{i_1} + \dots +a_{i_k})^2 \\
&=\sum_{i_1}\dots \sum_{i_k} \left((a_{i_1} + \dots +a_{i_{k-1}})^2 + a_{i_k}(2(a_{i_1} + \dots + a_{i_{k-1}}) + a_{i_k})\right) \\
&= \left(\sum_{i_1}\dots \sum_{i_k} (a_{i_1} + \dots +a_{i_{k-1}})^2 \right) + \left(\sum_{i_1}\dots \sum_{i_k}a_{i_k}(2(a_{i_1} + \dots + a_{i_{k-1}}) + a_{i_k})\right)\\
&= |A|\cdot ||(k-1)A||_2^2 + \left(2||A||_1 ||(k-1)A||_1 + |A|^{k-1}||A||_2^2 \right)\\
&=|A|\big((k-1)|A|^{k-3}(|A|||A||_2^2 + (k-2)||A||_1^2\big) \\
&= k|A|^{k-1}||A||_2^2 + k(k-1)|A|^{k-2}||A||_1^2\\
&= k|A|^{k-2}(|A|\cdot ||A||_2^2 + (k-1)||A||_1^2).
\end{align*}
\end{proof}

We now find a simple equation for $c_d(G^{\square k})$.
\begin{lemma}
\label{L:degcart}
For a graph $G$, 
\[
c_d(G^{\square k}) = \frac{c_d(G)}{k}.
\]
\end{lemma}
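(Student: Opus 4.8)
The plan is to reduce the claim to the two multiset-sum lemmas immediately preceding it, after first observing that the degree multiset of $G^{\square k}$ is exactly the $k$-fold multiset sum of the degree multiset of $G$. A vertex of $G^{\square k}$ is a tuple $(a_1, \dots, a_k)$ with each $a_i \in V(G)$, and from the adjacency rule defining the Cartesian product its degree is $\sum_{i=1}^k \deg_G(a_i)$. Hence, writing $d$ for the degree multiset of $G$ and $n = |V(G)| = |d|$, the degree multiset of $G^{\square k}$ is precisely $kd$ in the notation of the previous two lemmas, with $|kd| = n^k$.

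Next I would apply the formula of Lemma \ref{L:fact} to the vector $kd$, namely
\[
c_d(G^{\square k}) = n^k \frac{\|kd\|_2^2}{\|kd\|_1^2} - 1,
\]
and substitute the two preceding lemmas. Writing $S_1 = \|d\|_1$ and $S_2 = \|d\|_2^2$, these give $\|kd\|_1^2 = k^2 n^{2k-2} S_1^2$ and $\|kd\|_2^2 = k n^{k-2}\bigl(n S_2 + (k-1) S_1^2\bigr)$. Substituting and cancelling the common factor $n^{2k-2}$ collapses the expression to
\[
c_d(G^{\square k}) = \frac{n S_2 + (k-1) S_1^2}{k S_1^2} - 1 = \frac{1}{k}\left(\frac{n S_2}{S_1^2} - 1\right).
\]
Finally, recognizing that $\frac{n S_2}{S_1^2} - 1 = n \frac{\|d\|_2^2}{\|d\|_1^2} - 1 = c_d(G)$ by Equation \ref{E:formula} yields the claim.

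The algebra here I expect to go through cleanly once the two norm lemmas are plugged in; the single step of genuine substance is the identification of the degree multiset of $G^{\square k}$ with $kd$, so I would state that combinatorial fact explicitly rather than leave it implicit. The main obstacle, such as it is, lies in getting the bookkeeping of the exponents of $n$ right across the substitution, since both norm lemmas carry factors of $|A|^{k-1}$ and $|A|^{k-2}$ whose product with the leading $n^k$ must be tracked carefully before cancellation. I would also note the trivial case $k=1$ (where the identity reads $c_d(G) = c_d(G)$) and observe that $S_1 > 0$ for any graph with an edge, so no division issue arises.
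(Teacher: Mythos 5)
Your proof is correct and follows essentially the same route as the paper: both apply the formula of Lemma \ref{L:fact} to the degree multiset of $G^{\square k}$, substitute the two preceding multiset-sum norm lemmas, and simplify identically. Your only addition is stating explicitly that the degree multiset of $G^{\square k}$ equals the $k$-fold multiset sum $kd$, a fact the paper uses implicitly, and making it explicit is a reasonable improvement in exposition rather than a different argument.
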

\begin{proof}
We have that
\begin{align*}
c_d(G^{\square k}) &= |kd|\frac{||kd||_2^2}{||kd||_1^2} - 1 \\
&= |d|^k \left(\frac{k|d|^{k-2}(|d|\cdot ||d||_2^2 + (k-1)||d||_1^2)}{(k|d|^{k-1}||d||_1)^2} \right)- 1 \\
&= \frac{|d|\cdot ||d||_2^2 + (k-1)||d||_1^2}{k||d||_1^2}- 1 \\
&= \frac{|d|\cdot ||d||_2^2 -||d||_1^2}{k||d||_1^2} = \frac{1}{k}\left(|d| \frac{||d||_2^2}{||d||_1^2} - 1\right)\\
&= \frac{c_d(G)}{k}.
\end{align*}
\end{proof}

\begin{theorem}
Let $G$ be a non-regular graph.  Then
\[
\lim_{k \to \infty} \gamma^2(G^{\square k})-1 = \infty, \lim_{k \to \infty} c_e(G^{\square k}) = \infty, \text{ and } \lim_{k \to \infty} c_d(G^{\square k}) = 0.
\]
\end{theorem}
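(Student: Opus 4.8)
The plan is to treat the three limits separately, since each follows from a different structural lemma already established, and in each case it is the non-regularity of $G$ that supplies the strict inequality forcing divergence (for $c_d$ non-regularity is in fact irrelevant).

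The $c_d$ statement is essentially immediate. By Lemma \ref{L:degcart} we have $c_d(G^{\square k}) = c_d(G)/k$. Since $G$ is a fixed graph, $c_d(G)$ is a finite constant, so letting $k \to \infty$ gives $c_d(G^{\square k}) \to 0$. For $c_e$, I would iterate Lemma \ref{L:product}. Rewriting that identity as $c_e(A \square B) + 1 = (c_e(A)+1)(c_e(B)+1)$ and inducting on $k$ (applying the lemma with $A = G^{\square(k-1)}$ and $B = G$) yields $c_e(G^{\square k}) + 1 = (c_e(G)+1)^k$, that is $c_e(G^{\square k}) = (c_e(G)+1)^k - 1$. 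It then suffices to show $c_e(G) > 0$, so that the base $c_e(G)+1$ exceeds $1$ and the power diverges.

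This is where non-regularity enters: the principal eigenvector $x$ of a connected graph is constant if and only if $G$ is regular (a constant eigenvector $c\mathbf{1}$ forces $A\mathbf{1} = \lambda \mathbf{1}$, hence every degree equals $\lambda$). Since $G$ is non-regular, $x$ is non-constant, so its variance is strictly positive and $c_e(G) = \sigma^2/\mu^2 > 0$; hence $c_e(G^{\square k}) \to \infty$. For the principal ratio, Lemma \ref{L:CPEigen} shows that the principal eigenvector of $G^{\square k}$ is the tensor power $z$ with $z_{a_1 \cdots a_k} = x_{a_1} \cdots x_{a_k}$. Its largest and smallest entries are $x_{\text{max}}^k$ and $x_{\text{min}}^k$, so $\gamma(G^{\square k}) = \gamma(G)^k$. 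The same non-regularity argument gives $x_{\text{max}} > x_{\text{min}}$, i.e. $\gamma(G) > 1$, whence $\gamma(G)^k \to \infty$ and therefore $\gamma^2(G^{\square k}) - 1 \to \infty$. Alternatively, this last limit follows from the preceding one together with Lemma \ref{L:eigen}, which gives $\gamma^2(G^{\square k}) - 1 \geq c_e(G^{\square k})$.

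The only real content beyond bookkeeping is the claim that a connected non-regular graph has a non-constant principal eigenvector; I expect this to be the one step worth stating carefully, though it is standard and short. Everything else is a routine induction or a direct substitution into results already proved, so I anticipate no genuine obstacle.
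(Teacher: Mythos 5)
Your proposal is correct and follows essentially the same route as the paper: $\gamma(G^{\square k}) = \gamma(G)^k$ via Lemma \ref{L:CPEigen} with $\gamma(G) > 1$ from non-regularity, $c_e(G^{\square k}) = (c_e(G)+1)^k - 1$ by iterating Lemma \ref{L:product}, and $c_d(G^{\square k}) = c_d(G)/k$ from Lemma \ref{L:degcart}. In fact you are slightly more careful than the paper, which leaves implicit the step you rightly single out --- that a connected non-regular graph has a non-constant principal eigenvector, so $c_e(G) > 0$ and the geometric growth actually diverges.
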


\begin{proof}
Let $(\lambda,x)$ be the principal eigenpair of $G$.  By Lemma \ref{L:CPEigen},  $\gamma = x_\text{max}/x_\text{min}$ and  $\gamma(G^{\square k}) = (x_\text{max}/x_\text{min})^k$.  Further since $G$ is non-regular we have that $\gamma(G) > 1$ from which it follows 
\[
\lim_{k \to \infty} \gamma^2(G^{\square k})-1 = \infty.
\]
The remaining two limits follow from Lemmas \ref{L:product} and \ref{L:degcart}, respectively. 
\end{proof}

\section{Extremal Graphs}

We conclude with conjectures concerning the extremal graphs of $c_e$ and $c_d$.

The kite graph (aka lollipop graph) is the extremal graph for the maximum hitting time of a graph  as well as the maximum principal ratio for graphs on $n$ vertices \cite{Brightwell, Tait}. In each case, the head size (i.e., $s$ in $P_mK_s$) is a function of $n$.  The maximum hitting time occurs when $s \approx (2n+1)/3$ while maximising the principal ratio requires $s \approx n/\log n$.  We conjecture that a kite graph maximises $c_e$ when the head size is precisely $s=4$.

\begin{conjecture}
(``Four's a Crowd") The connected graph which achieves the maximum of $c_e$ on $n \geq 6$ vertices is $P_{n-3}K_4$.
\end{conjecture}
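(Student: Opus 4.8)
The plan is to exploit the scale-invariance of $c_e$ to recast the conjecture as a cleaner optimization problem. Normalizing the principal eigenvector so that $||x||_2 = 1$, Equation \ref{E:formula} gives $c_e(G) = n/||x||_1^2 - 1$, so maximizing $c_e$ over connected graphs on $n$ vertices is \emph{equivalent} to minimizing $||x||_1$, the $\ell_1$-norm of the unit-length Perron vector. Intuitively we want this vector as spiky as possible: a small, dense ``core'' carrying almost all of the $\ell_2$-mass, together with a long, rapidly decaying ``tail'' of near-zero coordinates that inflate $n$ without contributing to $||x||_1$. The kite $P_{n-3}K_4$ realizes exactly this profile, so I would split the argument into two independent parts: (I) that the maximizer must be a kite $P_{n-s+1}K_s$ for some head size $s$, and (II) that among kites the optimal head is $s=4$.

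For part (II), the concrete half, I would combine the explicit eigenvector of Lemma \ref{L:kite_vec} with the eigenvalue localization of Lemma \ref{L:kite_radius}. Writing $\sigma = \tfrac12(\lambda + \sqrt{\lambda^2-4})$, the path coordinates decay geometrically with ratio $\sigma$ away from the head, so as $n \to \infty$ with $s$ fixed both $||x||_2^2$ and $||x||_1$ converge (after normalizing the junction to $1$) while the number of coordinates grows linearly. Hence $c_e(P_{n-s+1}K_s) \sim n/P(s)$, where $P(s) = ||x||_1^2/||x||_2^2$ tends to the participation ratio
\[
P(s) = \frac{\left(\frac{\sigma}{\sigma-1} + (s-1)\right)^2}{\frac{\sigma^2}{\sigma^2-1} + (s-1)}, \qquad \sigma = \sigma(s),
\]
and the problem reduces to minimizing $P(s)$ over integers $s \geq 3$. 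Using $\lambda(P_m K_s) \to s-1$ and the resulting $\sigma(s)$, a direct evaluation gives $P(3)\approx 6.6$, $P(4)\approx 5.1$, $P(5)\approx 5.7$, with $P$ increasing thereafter, so the linear growth rate of $c_e$ is largest precisely at $s=4$ (where, pleasingly, $\sigma(4) = \varphi^2$). To promote this asymptotic statement to every $n \geq 6$ I would establish monotonicity of $P(s)$ for $s \geq 4$ and dispatch the finitely many boundary comparisons using the exact formula for $c_e(P_{n-s+1}K_s)$.

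Part (I) is the genuine obstacle, and I expect it to absorb most of the effort. The target is a structural theorem: the connected graph minimizing $||x||_1$ at unit $\ell_2$-norm on $n$ vertices is a kite. My plan mirrors the rearrangement strategy used for the analogous principal-ratio extremal results in \cite{Brightwell, Tait}, adapted to the functional $||x||_1$. Concretely I would (a) show that in an extremal graph the vertices can be ordered by eigenvector value so that neighborhoods are nested (a threshold/Kelmans-type property), using that an edge-shifting operation has a predictable effect on $||x||_1$; (b) deduce that the high-value vertices induce a clique (the head) while the low-value vertices induce a path (the tail), since filling a missing edge among high-value vertices or deleting a chord among low-value vertices can be shown to move $||x||_1$ in the favorable direction while preserving connectivity; and (c) argue that the tail attaches to the clique at a single vertex. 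The delicate point, exactly as in the principal-ratio case, is that perturbing an edge also perturbs $\lambda$ and hence the \emph{entire} Perron vector, so the monotonicity claims in (a)--(b) cannot be made by holding $x$ fixed but must be controlled through eigenvalue-interlacing and second-order estimates; taming these feedback terms is where I anticipate the main difficulty.
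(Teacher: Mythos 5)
The first thing to say is that the paper does not prove this statement: it is offered as a conjecture (``Four's a Crowd''), supported only by the remark that experimental evidence and the analogy with the principal-ratio extremal results of \cite{Brightwell, Tait} motivate it. So there is no proof in the paper to compare yours against, and the right standard is whether your proposal actually closes the conjecture. It does not. Your part (I) --- that the maximizer of $c_e$, equivalently the minimizer of $\|x\|_1$ at unit $\ell_2$-norm, must be a kite --- is the entire content of the conjecture, and your treatment of it is a program, not an argument: steps (a)--(c) are each stated as things you ``would show,'' and you yourself identify the fatal obstruction (an edge perturbation moves $\lambda$ and hence every coordinate of the Perron vector, so no monotonicity of $\|x\|_1$ under edge-shifting is available by holding $x$ fixed). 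The principal-ratio proofs you cite do not transfer as stated: $\gamma$ depends on only two coordinates of $x$, while $\|x\|_1$ is a global functional, and it is precisely the missing edge-shift lemma for $\|x\|_1$ that would constitute the new idea. As it stands, part (I) is an unproved reduction, so the proposal proves nothing beyond heuristics.

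Part (II) is closer to checkable mathematics but also has a genuine gap and two technical slips. The gap: your asymptotic $c_e(P_{n-s+1}K_s) \sim n/P(s)$ is valid for \emph{fixed} $s$ as $n \to \infty$, but for each finite $n$ the competitor kites include head sizes $s$ growing with $n$ (recall the principal-ratio maximizer has $s \approx n/\log n$), so ``finitely many boundary comparisons'' is inaccurate --- you face a two-parameter comparison over all $3 \leq s \leq n-1$, in a regime where the participation-ratio limit does not apply (e.g.\ $c_e(P_2K_{n-1}) \to 0$ by the paper's own computation). The slips: first, $\lambda(P_mK_s)$ does not tend to $s-1$; by Lemma \ref{L:kite_radius} it is pinned strictly inside $\left(s-1+\frac{1}{s(s-1)},\, s-1+\frac{1}{(s-1)^2}\right)$, which matters at $s=3$ where setting $\lambda = 2$ gives the degenerate $\sigma = \tau = 1$. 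Second, by Lemma \ref{L:kite_vec} the $s-1$ non-junction head entries equal $\frac{\sigma}{s-1}$ times the junction value, not $1$, so your $P(s)$ should be
\[
P(s) = \frac{\left(\frac{\sigma}{\sigma-1} + \sigma\right)^2}{\frac{\sigma^2}{\sigma^2-1} + \frac{\sigma^2}{s-1}}.
\]
With these corrections the numerics still rank $P(4) < P(5) < P(3)$ (and the pleasant identity $\sigma \approx \varphi^2$ at $s=4$ survives, since $\lambda$ is near $3$), so your heuristic conclusion that $s=4$ wins among kites for large $n$ is believable --- but it remains an asymptotic heuristic for one slice of the problem, while the structural reduction that the conjecture really demands is untouched.
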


We also make the following conjecture about $c_d$.

\begin{conjecture}
The connected graph which achieves the maximum of $c_d$ on $n$ vertices is the star $K_{1,n-1}$. 
\end{conjecture}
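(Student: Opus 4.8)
The plan is to establish the sharp bound $c_d(G) \le c_d(K_{1,n-1})$ for every connected simple graph $G$ on $n$ vertices, with equality precisely when $G$ is the star; this would in fact upgrade the conjecture to a theorem. First I would record the target value. The star has degree vector $d = ((n-1,1),(1,n-1))$, so $||d||_1 = 2(n-1)$ and $||d||_2^2 = (n-1)^2 + (n-1) = n(n-1)$, and Equation \ref{E:formula} gives
\[
c_d(K_{1,n-1}) = \frac{n \cdot n(n-1)}{4(n-1)^2} - 1 = \frac{n^2}{4(n-1)} - 1 = \frac{(n-2)^2}{4(n-1)}.
\]
By Equation \ref{E:formula} it then suffices to prove $||d||_2^2/||d||_1^2 \le n/(4(n-1))$ for an arbitrary connected graph.

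The engine of the argument is that connectivity and simplicity trap every degree in the interval $[1,n-1]$: there are no isolated vertices, and no degree exceeds $n-1$. Hence $(d_i-1)(d_i-(n-1)) \le 0$, i.e. $d_i^2 \le n d_i - (n-1)$, for each vertex $i$. Writing $S = ||d||_1 = 2|E(G)|$ and $Q = ||d||_2^2$ and summing over the $n$ vertices produces the linear-in-$S$ bound
\[
Q \le nS - n(n-1).
\]

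I would then combine this with the trivial inequality $(S - 2(n-1))^2 \ge 0$, rewritten as $4(n-1)S - 4(n-1)^2 \le S^2$. Chaining the two bounds gives
\[
4(n-1)Q \le 4(n-1)\big(nS - n(n-1)\big) = n\big(4(n-1)S - 4(n-1)^2\big) \le nS^2,
\]
which is precisely $Q/S^2 \le n/(4(n-1))$, hence $c_d(G) \le (n-2)^2/(4(n-1))$, as desired.

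The one step that genuinely requires care is the equality analysis, and it is also where the conjecture looks subtle at first glance: a priori one might fear that a dense, wildly irregular graph could beat the star, but the interval constraint together with the two one-line inequalities shows it cannot. Equality in the summed quadratic forces $d_i \in \{1,n-1\}$ for every $i$, while equality in the square forces $S = 2(n-1)$, i.e. $G$ is a tree. For $n \ge 3$ every tree has a vertex of degree at least two, hence---being confined to $\{1,n-1\}$---a vertex of degree exactly $n-1$; the remaining $n-1$ degrees then sum to $n-1$ with each at least one, so all equal one. This forces $G = K_{1,n-1}$, completing both the bound and the uniqueness.
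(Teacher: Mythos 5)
Your proposal is correct, and there is nothing in the paper to compare it against: the statement is left as a conjecture there (supported only by the computation $\lim_n c_d(K_{1,n})=\infty$ in Theorem \ref{L:star_limits}), so your argument, if written up, would upgrade it to a theorem. I verified the steps. The target value is right: for the star, $\|d\|_1=2(n-1)$, $\|d\|_2^2=n(n-1)$, so Equation \ref{E:formula} gives $c_d(K_{1,n-1})=(n-2)^2/\bigl(4(n-1)\bigr)$. Your two inequalities are both valid: since $G$ is connected and simple, $1\le d_i\le n-1$, so summing $(d_i-1)\bigl(d_i-(n-1)\bigr)\le 0$ gives $Q\le nS-n(n-1)$; and $(S-2(n-1))^2\ge 0$ gives $4(n-1)S-4(n-1)^2\le S^2$. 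Chaining them yields $4(n-1)Q\le n\bigl(4(n-1)S-4(n-1)^2\bigr)\le nS^2$, i.e.\ $nQ/S^2-1\le n^2/\bigl(4(n-1)\bigr)-1=(n-2)^2/\bigl(4(n-1)\bigr)$, and the star attains equality in both steps. The equality analysis is also sound: equality forces $d_i\in\{1,n-1\}$ for all $i$ and $S=2(n-1)$ (so $G$ is a tree); for $n\ge 3$ a tree has an internal vertex, which must then have degree $n-1$, and the handshake count forces all other degrees to be $1$, so $G=K_{1,n-1}$. Note how much stronger this is than what the paper's machinery gives: Lemma \ref{L:Cd_bound} with $\Delta\le n-1$ and $\overline{d}\ge 2(n-1)/n$ only yields $c_d\le n/2-1$, roughly twice your sharp bound $\approx n/4$. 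Two small remarks worth recording if you write this up: connectivity is used only to guarantee $\delta\ge 1$ (the bound holds for any graph without isolated vertices, and the uniqueness argument can even bypass the tree step, since a vertex of degree $n-1$ forces connectivity); and the degenerate case $n=2$ gives $c_d=0$ with $K_{1,1}=K_2$, consistent with the formula.
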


Consider the irregularity measure
\[
\Gamma(G) := \frac{c_e(G) - c_d(G)}{\gamma^2(G)}
\]
which satisfies $-1 \leq \Gamma(G) \leq 1$.  We have evidence to suggest that $|\Gamma| < 1$.  Consider the following. 


\begin{theorem}
\[
\lim_{n \to \infty} \Gamma(K_{1,n}) = -1/4.
\]
Moreover, 
\[
\lim_{k \to \infty} \lim_{n \to \infty}\Gamma(S(k,nk)) = -1/4.
\]
\end{theorem}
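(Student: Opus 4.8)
The plan is to reduce both statements to a single limit computation on the complete split graph, exploiting the fact that $K_{1,n}=S(1,n)$ so that the star is simply the $k=1$ case. First I would record closed forms for the three relevant quantities on $S(p,q)$, where $p$ is the clique size and $q$ the independent-set size. By Lemma \ref{L:Quasi} the principal eigenvector equals $1$ on the clique and
\[
b=\frac{1-p+\sqrt{(p-1)^2+4pq}}{2q}
\]
on the independent set, with $\gamma=1/b$, and $b$ is the positive root of $qb^2+(p-1)b-p=0$. The same variance computations used in the $c_e$ and $c_d$ theorems, carried out with general $p,q$, give $c_e(S(p,q))=\frac{(b-1)^2\,pq}{(p+qb)^2}$ and $c_d(S(p,q))=\frac{q(q-1)^2}{p(p+2q-1)^2}$.

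Next I would specialize to $p=k$, $q=nk$ and let $n\to\infty$ with $k$ fixed. Here $b\sim n^{-1/2}\to 0$, so $\gamma^2=b^{-2}\to\infty$, while $c_e=\frac{(b-1)^2 n}{(1+nb)^2}\to 1$ (the numerator grows like $n$ and the denominator like $(nb)^2=n^2b^2$, which also grows like $n$). Since $c_e$ stays bounded while $\gamma^2$ diverges, the first term of $\Gamma$ vanishes in the limit: $c_e/\gamma^2\to 0$.

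The crux is the second term $c_d/\gamma^2=c_d\,b^2$, an $\infty/\infty$ form since both $c_d$ and $\gamma^2$ grow linearly in $n$. The resolving identity comes from the eigenvector quadratic itself: dividing $nk\,b^2+(k-1)b-k=0$ by $k$ gives $nb^2=1-\frac{k-1}{k}b$, so $nb^2\to 1$ as $b\to 0$. Writing
\[
c_d\,b^2=\frac{(nk-1)^2}{(2nk+k-1)^2}\cdot nb^2
\]
and observing that the rational factor tends to $\frac{1}{4}$, I obtain $c_d/\gamma^2\to\frac{1}{4}$. Hence $\lim_{n\to\infty}\Gamma(S(k,nk))=0-\frac{1}{4}=-\frac{1}{4}$ for every fixed $k\ge 1$. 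Taking $k=1$ recovers the star limit $\lim_{n\to\infty}\Gamma(K_{1,n})=-\frac{1}{4}$ (the first statement), and since the inner limit is already independent of $k$, the outer limit $k\to\infty$ is immediate (the second statement).

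The main obstacle is precisely the indeterminate quotient $c_d/\gamma^2$: termwise the numerator and denominator each diverge like a constant multiple of $n$, so the limit is invisible without cancellation. The key is the relation $nb^2\to 1$ extracted from the defining quadratic for $b$, which removes the divergence exactly and isolates the elementary rational factor $\frac{(nk-1)^2}{(2nk+k-1)^2}\to\frac{1}{4}$. A secondary point to verify carefully is that $c_e$ remains bounded (indeed $c_e\to 1$), so the eigenvector-dispersion contribution to $\Gamma$ disappears and the whole limit is governed by the degree-dispersion term.
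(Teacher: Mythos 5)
Your proof is correct, but it takes a genuinely different route from the paper's, and the difference is worth recording. The paper treats the two statements separately: the star is handled with the explicit formulas from Theorem \ref{L:star_limits} ($\gamma^2 = n$, $c_e \to 1$, $c_d \sim n/4$), and the split-graph statement is handled by substituting the three previously proved $n\to\infty$ limits for $\gamma(S(n,kn))$, $c_e(S(n,kn))$, and $c_d(S(n,kn))$ --- note that there the \emph{clique} has size $n\to\infty$ --- so the inner limit is a genuine function of $k$ and the outer limit $k\to\infty$ does real work. You instead read the statement literally as $S(k,nk)$ with the clique size $k$ held fixed, derive closed forms for $c_e$ and $c_d$ on general $S(p,q)$ (both agree with the paper's formulas after simplification), and use the defining quadratic $qb^2+(p-1)b-p=0$ to extract $nb^2 = 1-\frac{k-1}{k}b \to 1$, which is exactly what resolves the $\infty/\infty$ form in $c_d/\gamma^2$ and gives $\lim_{n\to\infty}\Gamma(S(k,nk)) = 0 - \tfrac14 = -\tfrac14$ for \emph{every} fixed $k\ge 1$; the star is then the case $k=1$ (since $S(1,n)=K_{1,n}$, as the paper itself notes) and the outer limit over $k$ is vacuous. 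I verified your intermediate claims ($b\sim n^{-1/2}$, $c_e = (b-1)^2 n/(1+nb)^2 \to 1$, and $c_d b^2 = \frac{(nk-1)^2}{(2nk+k-1)^2}\cdot nb^2 \to \tfrac14$); all are sound. What your approach buys: a single unified argument, a strictly stronger conclusion (the limit is $-1/4$ at each fixed clique size, not merely in the iterated limit), and fidelity to the theorem as literally stated --- the paper's own proof silently interprets $S(k,nk)$ as $S(n,kn)$ so that its earlier limit theorems apply, which suggests the indices in the statement are transposed. What the paper's route buys: it reuses the already-established limits and exhibits the $k$-dependence of $\lim_n \Gamma(S(n,kn))$, for which the outer limit is essential. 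One point you handled correctly that deserves flagging: you use $\Gamma = (c_e - c_d)/\gamma^2$ as defined in the extremal-graphs section, which is consistent with the paper's computation, whereas the introduction defines $\Gamma$ with the opposite sign; the two definitions cannot both be intended given the value $-1/4$.
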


\begin{proof}
Let $(\lambda,x)$ be the principal eigenpair of $K_{1,n}$.  Recall from Lemma \ref{L:star_limits} that $\gamma = \sqrt{n}$.  We have then
\[
\lim_{n \to \infty} \Gamma(K_{1,n}) = \lim_{n \to \infty} \frac{\left(\frac{2n}{n} - 1\right) - \left(\frac{n^3}{4n^2}-1\right)}{n} = -1/4.
\]
Moreover we have
\[
\lim_{k \to \infty} \lim_{n \to \infty}\Gamma(S(k,nk) = \lim_{k \to \infty} \frac{\frac{k\left(\frac{\sqrt{4k+1}-1}{k}-2\right)^2}{(\sqrt{4k+1}+1)^2} - \frac{k^3}{(2k+1)^2}}{\left(\frac{\sqrt{4k+1}+1}{2}\right)^2} = \lim_{k \to \infty} \frac{\frac{4}{4k} - \frac{k}{4}}{\frac{4k}{4}} = -1/4.
\]
\end{proof}

Based on experimental evidence we make the following conjecture. 

\begin{conjecture}
  $\Gamma(G)\geq -1/4$ and the star is extremal. 
\end{conjecture}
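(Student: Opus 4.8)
The plan is to establish the lower bound $\Gamma(G)\ge -1/4$; the preceding theorem shows $\lim_{n\to\infty}\Gamma(K_{1,n})=-1/4$, so $-1/4$ is the sharp infimum and the star is extremal in the limit. Since $\gamma^2(G)>0$, the inequality $\Gamma(G)\ge -1/4$ is equivalent to the denominator-free statement $c_d(G)-c_e(G)\le \tfrac14\gamma^2(G)$, which I would prove in this form.

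First I would rewrite the left-hand side with Equation \ref{E:formula}. The additive constant $-1$ is common to $c_d$ and $c_e$ and cancels, giving
\[
c_d(G)-c_e(G)=n\left(\frac{\|d\|_2^2}{\|d\|_1^2}-\frac{\|x\|_2^2}{\|x\|_1^2}\right),
\]
so the problem reduces to comparing the normalized second moments of the degree distribution and of the principal eigenvector. The bridge between the two vectors is the eigenequation: after normalizing $x_{\min}=1$ and $x_{\max}=\gamma$, every vertex $v$ satisfies $\lambda x_v=\sum_{u\sim v}x_u=\deg(v)\,\bar x_{N(v)}$, where $\bar x_{N(v)}\in[1,\gamma]$ is the mean eigenvector value over the neighbors of $v$. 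Hence
\[
\deg(v)=\frac{\lambda x_v}{\bar x_{N(v)}},\qquad \frac{\lambda x_v}{\gamma}\le \deg(v)\le \lambda x_v.
\]

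The crux is converting this per-vertex sandwich into the global bound. The naive route---using $\deg(v)\le \lambda x_v$ inside $\|d\|_2^2$ and $\deg(v)\ge \lambda x_v/\gamma$ inside $\|d\|_1$---yields only $c_d+1\le \gamma^2(c_e+1)$, that is $c_d-c_e\le(\gamma^2-1)(c_e+1)$, which overshoots $\tfrac14\gamma^2$ by a large constant factor (a factor of roughly eight at the star). The loss is structural: the upper estimate is tight only at the high-eigenvector vertices while the lower estimate is tight only at the low-eigenvector vertices, and these two populations are disjoint. I would therefore keep $\theta_v:=\bar x_{N(v)}\in[1,\gamma]$ explicit and bound $c_d-c_e$ by a functional of the joint distribution of $(x_v,\theta_v)$ over the vertices; I expect the constant $\tfrac14$ to emerge from optimizing this functional as the maximum $\tfrac14$ of a product of the form $t(1-t)$, subject to the normalizations of $\|x\|_1,\|x\|_2$ and the constraint $\theta_v\in[1,\gamma]$.

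The main obstacle is precisely this final optimization: it requires controlling the correlation between a vertex's eigenvector value and the average eigenvector value of its neighborhood, whereas every estimate available so far (Lemma \ref{L:Cd_bound} and the naive sandwich above) is one-sided and already loses a factor of two or more at the star. A proof must therefore exploit the coupling between $d$ and $x$ rather than estimate them separately. Concretely, I would first verify the bound on graphs dominated by a single high-degree vertex, such as the star and the complete split graphs, where $\|d\|_1$ is controlled and the constant $\tfrac14$ is transparent, and then attempt to reduce the general case to such a configuration through an edge-switching or compression argument that does not decrease $c_d-c_e$ for fixed $\gamma$. Establishing that the star is the unique asymptotic optimizer would, I expect, require a stability version of this optimization, and it is this step that keeps the statement a conjecture rather than a theorem.
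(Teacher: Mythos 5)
You have not proved this statement, and neither does the paper: it is posed explicitly as a conjecture, supported only by the theorem immediately preceding it (which gives $\lim_{n\to\infty}\Gamma(K_{1,n})=-1/4$ and $\lim_{k\to\infty}\lim_{n\to\infty}\Gamma(S(k,nk))=-1/4$) and by unspecified experimental evidence. So there is no paper proof to compare against, and the question is whether your attempt closes the gap on its own; it does not, as you yourself concede in your final sentence. What you have is correct as far as it goes: the reduction of $\Gamma(G)\ge -1/4$ to the denominator-free form $c_d(G)-c_e(G)\le \tfrac14\gamma^2(G)$ is valid (using the definition $\Gamma=(c_e-c_d)/\gamma^2$ from the section where the conjecture appears; note the introduction states the numerator with the opposite sign, an inconsistency in the paper that you silently resolved in the direction consistent with the limit computations); the cancellation of the $-1$ terms via Equation \ref{E:formula} is correct; the sandwich $\lambda x_v/\gamma \le \deg(v) \le \lambda x_v$ under the normalization $x_{\min}=1$ is correct; and your observation that the decoupled one-sided estimates yield only $c_d+1\le\gamma^2(c_e+1)$, overshooting by a factor of roughly eight at the star, is accurate and is a genuinely useful diagnostic that the paper does not contain.

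The genuine gap is the entire final step. You reduce to maximizing a functional of the joint distribution of $(x_v,\theta_v)$ with $\theta_v=\bar x_{N(v)}\in[1,\gamma]$, and express the hope that the constant $\tfrac14$ emerges as the maximum of $t(1-t)$, but you specify no functional, no constraint set, and no optimization argument. The pairs $(x_v,\theta_v)$ are not free: they must be simultaneously realized by an actual graph, and the eigenequation couples them globally, so optimizing over all formally admissible distributions could well produce a constant worse than $1/4$ --- your own factor-of-eight computation shows exactly how lossy estimates become once the coupling is relaxed. Likewise, the proposed edge-switching or compression reduction to star-like configurations is named but not constructed, and monotonicity of $c_d-c_e$ at fixed $\gamma$ under any concrete switching operation is precisely the kind of claim that tends to fail for spectral quantities, since $\gamma$ itself moves under edge modifications. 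In short: your framing and partial estimates are sound and go beyond what the paper offers as evidence, but the statement remains a conjecture after your attempt, exactly as it stands in the paper.
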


\section{Properties of Complete Split Graphs}


We showed that the complete split graph $S(n,m)$ has several interesting properties.  Firstly, the principal ratio, eigenvector dispersion, and degree dispersion of $S(n,nk)$ all converge to a positive number for fixed $k$.  In the case when $k = 1$, we showed that the principal ratio of $S(n,n)$ converges to $\varphi$, the golden ratio. We conclude by showing that the average clustering coefficient and transitivity diverge for $S(n,kn)$ as we take $n$ and $k$ to be arbitrarily large. To the best of our knowledge this is the third such graph family to have this property \cite{Estrada} (c.f., the friendship (aka windmill, $n$-fan) graph \cite{Bollobas2003} and the agave graph \cite{Estrada2}).  Notably, this property was observed in brokerage networks formed by illicit marketplaces which are closely approximated by complete split graphs \cite{Spagnoletti2021, Thomaz, ThomazSoon}.

 
 The \textit{Watts-Strogatz clustering coefficient} (see \cite{Watts}) of a vertex $i$ is a measure of the transitivity of local connections in a network
 \[
 C_i = \frac{2t_i}{d_i(d_i-1)}
 \]
 where $t_i$ is the number of triangles which contain $i$.  The \textit{average Watts-Strogatz clustering coefficient} of a graph $G$ is defined
 \[
 \overline{C} = \frac{1}{n} \sum_{i=1}^n C_i.
 \]
 Finally the \textit{transitivity} of a graph (see  \cite{Newman2001, Wasserman}) is defined to be 
 \[
 T = \frac{3(\# \text{ of triangles})}{ \sum_{i=1}^n \binom{d_i}{2}}.
 \]

\begin{lemma}
\label{L:clustering}
Consider $S(n,m)$.  Let $u \in K_n$ and $v \notin K_n$.  Then 
\[
C_u = \frac{(n-1)(n-2) + 2(n-1)m}{(n+m-1)(n+m-2)} \text{ and } C_v = 1
\]
so that
\[
\overline{C} = \frac{m + \frac{{\left({\left(n - 1\right)} {\left(n - 2\right)} + 2 \, {\left(n - 1\right)} m\right)} n}{{\left(n + m - 1\right)} {\left(n + m - 2\right)}}}{n + m}.
\]
Moreover,
\[
T = \frac{{\left(n - 1\right)} {\left(n - 2\right)} n + 3 \, {\left(n - 1\right)} n m}{{\left(n + m - 1\right)} {\left(n + m - 2\right)} n + {\left(n - 1\right)} n m}.
\]
\end{lemma}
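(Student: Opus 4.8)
The plan is to compute all four quantities directly from the combinatorial structure of $S(n,m) = K_{n+m} - K_m$, exploiting the fact that there are only two vertex types. First I would fix notation: a \emph{clique vertex} $u \in K_n$ has degree $n+m-1$ (it is adjacent to all other vertices), while an \emph{independent vertex} $v \notin K_n$ has degree $n$ (it is adjacent only to the $n$ clique vertices). The key local count is the number of triangles through a vertex. For an independent vertex $v$, every pair of its $n$ neighbors lies in $K_n$ and hence is an edge, so \emph{every} pair of neighbors forms a triangle with $v$; thus $t_v = \binom{n}{2}$ and $C_v = 2t_v/(d_v(d_v-1)) = 2\binom{n}{2}/(n(n-1)) = 1$, as claimed. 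For a clique vertex $u$, I would split its neighbors into the $n-1$ other clique vertices and the $m$ independent vertices, then count triangles by which edges survive among these neighbors: the $\binom{n-1}{2}$ clique–clique pairs are all edges, the $(n-1)m$ clique–independent pairs are all edges, and the $\binom{m}{2}$ independent–independent pairs are all non-edges (since $K_m$ was removed). Hence $t_u = \binom{n-1}{2} + (n-1)m$, and plugging into $C_u = 2t_u/(d_u(d_u-1))$ with $d_u = n+m-1$ gives the stated $C_u = \tfrac{(n-1)(n-2)+2(n-1)m}{(n+m-1)(n+m-2)}$ after writing $2\binom{n-1}{2} = (n-1)(n-2)$.

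Next I would assemble $\overline{C}$ by averaging: there are $n$ clique vertices each contributing $C_u$ and $m$ independent vertices each contributing $C_v = 1$, so $\overline{C} = \tfrac{1}{n+m}(n\,C_u + m\cdot 1)$, which is exactly the displayed formula once $C_u$ is substituted. For the transitivity $T = 3(\#\text{triangles})/\sum_i \binom{d_i}{2}$, I would compute the two global quantities separately. The total triangle count is cleanest via $\sum_i t_i = 3\cdot(\#\text{triangles})$, giving $3(\#\text{triangles}) = \sum_i t_i = n\big(\binom{n-1}{2}+(n-1)m\big) + m\binom{n}{2}$; after factoring this simplifies to $(n-1)(n-2)n + 3(n-1)nm$ (the independent-vertex term $m\binom{n}{2} = \tfrac{(n-1)n m}{2}$ combines with part of the clique term). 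For the denominator, $\sum_i \binom{d_i}{2} = n\binom{n+m-1}{2} + m\binom{n}{2} = \tfrac{1}{2}\big((n+m-1)(n+m-2)n + (n-1)nm\big)$, and the factors of two cancel against the numerator, yielding the stated $T$.

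The main obstacle is purely algebraic bookkeeping rather than any conceptual difficulty: verifying that the raw sums of binomial coefficients collapse into the advertised factored forms (for instance, checking the triangle total $n\binom{n-1}{2} + (n-1)nm + m\binom{n}{2} = (n-1)(n-2)n + 3(n-1)nm$ after clearing the $\tfrac{1}{2}$ from the $3\cdot(\#\text{triangles})$ identity). I would guard against the usual off-by-factor errors by tracking the factor of $3$ in the triangle identity and the factor of $2$ in each $\binom{d_i}{2}$ consistently, and I would sanity-check the final expressions against a small case such as $S(2,1)$ (which is the triangle with one pendant-style structure) or $S(3,3)$ where the triangle count can be verified by hand. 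No deeper input is needed: connectivity is automatic since every independent vertex attaches to the full clique, and the two-orbit symmetry guarantees that the per-vertex counts $C_u, C_v, t_u, t_v$ are the only local data required.
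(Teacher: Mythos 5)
Your proposal is correct and follows essentially the same route as the paper: the same two-orbit counts $t_v=\binom{n}{2}$, $t_u=\binom{n-1}{2}+(n-1)m$ giving $C_u$ and $C_v=1$, the same weighted average for $\overline{C}$, and the same denominator $n\binom{n+m-1}{2}+m\binom{n}{2}$ for $T$. The only (immaterial) difference is that you obtain the triangle total via the identity $\sum_i t_i = 3\cdot(\#\text{triangles})$, while the paper counts triangles directly as $\binom{n}{3}+\binom{n}{2}m$; you track the resulting factor of $\tfrac{1}{2}$ correctly, so both yield the stated formula.
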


\begin{proof}
We have that
\[
C_u = \frac{2t_u}{d_u(d_u-1)} = \frac{2\left(\binom{n-1}{2} + \binom{n-1}{1}\binom{m}{1}\right)}{(n+m-1)(n+m-2)} =  \frac{(n-1)(n-2) + 2(n-1)m}{(n+m-1)(n+m-2)}
\]
and 
\[
C_v = \frac{2t_v}{d_v(d_v-1)} = \frac{2\binom{n}{2}}{n(n-1)} = 1.
\]
The formula for $\overline{C}$ follows by straight forward computation.  Moreover, our equation for $T$ follows naturally from the fact that the number of triangles in $S(n,m)$ is 
\[
\binom{n}{3} + \binom{n}{2}\binom{m}{1}
\]
and 
\[
\sum_{i} \binom{d_i}{2} = n\binom{n+m-1}{2} + m\binom{n}{2}.
\]
\end{proof}


\begin{theorem}
\[
\lim_{m \to \infty} \overline{C}(S(n, m)) = 1 \text{ and } \lim_{m \to \infty} T(S(n, m)) = 0
\]
and 
\[
\lim_{k \to \infty} \lim_{n \to \infty} \overline{C}(S(n, kn)) = 1 \text{ and } \lim_{k \to \infty} \lim_{n \to \infty} T(S(n,kn)) =0.
\]
\end{theorem}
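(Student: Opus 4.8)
The plan is to reduce everything to the closed-form expressions supplied by Lemma \ref{L:clustering} and then read off the limits by comparing the degrees (in the relevant variable) of the numerator and denominator of each rational function. No new structural information about $S(n,m)$ is needed beyond that lemma; all the work is in the asymptotics.

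For the first pair of limits I would fix $n$ and regard both $\overline{C}(S(n,m))$ and $T(S(n,m))$ as rational functions of $m$. In the formula for $\overline{C}$ the outer numerator is $m$ plus a correction whose own numerator is linear in $m$ while its denominator $(n+m-1)(n+m-2)$ is quadratic in $m$; the correction therefore tends to $0$, the whole expression behaves like $m/(n+m)$, and the limit is $1$. For $T$ the numerator $(n-1)(n-2)n + 3(n-1)nm$ is linear in $m$ whereas the denominator $(n+m-1)(n+m-2)n + (n-1)nm$ is quadratic, so $T = O(1/m) \to 0$. It is worth recording the underlying intuition: the $m$ independent-set vertices each have clustering exactly $1$ and eventually dominate the vertex count, driving $\overline{C} \to 1$, whereas the global wedge count is dominated by the open wedges centred at clique vertices that join two non-adjacent independent-set vertices; these carry no triangle and force $T \to 0$.

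For the iterated limits I would substitute $m = kn$ into the same two formulas, obtaining rational functions of $n$ with $k$ as a parameter, and first let $n \to \infty$ with $k$ fixed. Tracking the leading $n^{3}$ terms in numerator and denominator gives, for fixed $k$,
\[
\lim_{n \to \infty} \overline{C}(S(n,kn)) = \frac{k}{k+1} + \frac{2k+1}{(k+1)^{3}}, \qquad \lim_{n \to \infty} T(S(n,kn)) = \frac{3k+1}{k^{2}+3k+1}.
\]
Letting $k \to \infty$ then yields $1$ and $0$ respectively, which agree with the $m \to \infty$ answers above and serve as a consistency check. The only real care required is bookkeeping the leading-order terms in the nested limit and respecting the prescribed order (inner $n$, then outer $k$); I anticipate no genuine obstacle, since every quantity is an explicit rational function whose limits are fixed by its top-degree coefficients.
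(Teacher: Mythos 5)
Your proposal is correct and takes essentially the same route as the paper: both reduce to the closed-form rational expressions of Lemma \ref{L:clustering}, substitute $m = kn$ for the iterated limits, and read off each limit from the top-degree terms. Your inner limit $\frac{k}{k+1} + \frac{2k+1}{(k+1)^{3}}$ is algebraically identical to the paper's $\frac{k^{3}+2k^{2}+3k+1}{k^{3}+3k^{2}+3k+1}$, so the two computations agree exactly.
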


\begin{proof}
From Lemma \ref{L:clustering} we have 
\[
\lim_{m \to \infty} \overline{C}(S(n, m)) = \lim_{n \to \infty} \frac{m + \frac{2n^2m}{m^2}}{m} = 1
\]
and 
\[
\lim_{m \to \infty} T(S(n, m)) = \lim_{m \to \infty} \frac{3n^2m}{m^2} = 0.
\]
Now let $m = nk$. By substitution we find
\[
\overline{C}(S(n,nk)) = \frac{k n + \frac{{\left(2 \, k {\left(n - 1\right)} n + {\left(n - 1\right)} {\left(n - 2\right)}\right)} n}{{\left(k n + n - 1\right)} {\left(k n + n - 2\right)}}}{k n + n}.
\]
and 
\[
T(S(n,nk)) = \frac{3 \, k {\left(n - 1\right)} n^{2} + {\left(n - 1\right)} {\left(n - 2\right)} n}{k {\left(n - 1\right)} n^{2} + {\left(k n + n - 1\right)} {\left(k n + n - 2\right)} n}
\]
By straight forward computation we find
\[
\lim_{n \to \infty} \overline{C}(S(n, kn)) = \frac{k^{3} + 2 \, k^{2} + 3 \, k + 1}{k^{3} + 3 \, k^{2} + 3 \, k + 1} \text{ and } \lim_{n \to \infty} T(S(n,kn)) = \frac{3 \, k + 1}{k^{2} + 3 \, k + 1}.
\]
so that 
\[
\lim_{k \to \infty} \lim_{n \to \infty} \overline{C}(S(n, kn)) = 1 \text{ and } \lim_{k \to \infty} \lim_{n \to \infty} T(S(n,kn)) =0.
\]
as desired. 
\end{proof}

\bibliography{main}

\begin{thebibliography}{10}

\bibitem{Atkinson}
Anthony~B Atkinson et~al.
\newblock On the measurement of inequality.
\newblock {\em Journal of economic theory}, 2(3):244--263, 1970.

\bibitem{Barabasi}
Albert-L{\'a}szl{\'o} Barab{\'a}si and R{\'e}ka Albert.
\newblock Emergence of scaling in random networks.
\newblock {\em science}, 286(5439):509--512, 1999.

\bibitem{Bell}
Francis~K Bell.
\newblock A note on the irregularity of graphs.
\newblock {\em Linear Algebra and its Applications}, 161:45--54, 1992.

\bibitem{Bollobas2003}
B{\'e}la Bollob{\'a}s and Oliver~M Riordan.
\newblock Mathematical results on scale-free random graphs.
\newblock {\em Handbook of graphs and networks: from the genome to the
  internet}, pages 1--34, 2003.

\bibitem{Brightwell}
Graham Brightwell and Peter Winkler.
\newblock Maximum hitting time for random walks on graphs.
\newblock {\em Random Structures \& Algorithms}, 1(3):263--276, 1990.

\bibitem{Chung3}
Fan Chung, Linyuan Lu, and Van Vu.
\newblock Eigenvalues of random power law graphs.
\newblock {\em Annals of Combinatorics}, 7(1):21--33, 2003.

\bibitem{Cioaba}
Sebastian Cioaba and David Gregory.
\newblock Principal eigenvectors of irregular graphs.
\newblock {\em The Electronic Journal of Linear Algebra}, 16, 2007.

\bibitem{Edelstein}
WA~Edelstein, GH~Glover, CJ~Hardy, and RW~Redington.
\newblock The intrinsic signal-to-noise ratio in nmr imaging.
\newblock {\em Magnetic resonance in medicine}, 3(4):604--618, 1986.

\bibitem{Estrada}
Ernesto Estrada.
\newblock When local and global clustering of networks diverge.
\newblock {\em Linear Algebra and its Applications}, 488:249--263, 2016.

\bibitem{Estrada2}
Ernesto Estrada and Eusebio Vargas-Estrada.
\newblock Distance-sum heterogeneity in graphs and complex networks.
\newblock {\em Applied Mathematics and Computation}, 218(21):10393--10405,
  2012.

\bibitem{Foldes1977}
St{\'e}phane Foldes and Peter~L Hammer.
\newblock Split graphs having dilworth number two.
\newblock {\em Canadian Journal of Mathematics}, 29(3):666--672, 1977.

\bibitem{Gragido}
Will Gragido, Johnl Pirc, Nick Selby, and Daniel Molina.
\newblock Chapter 4 - signal-to-noise ratio.
\newblock In Will Gragido, Johnl Pirc, Nick Selby, and Daniel Molina, editors,
  {\em Blackhatonomics}, pages 45 -- 55. Syngress, Boston, 2013.

\bibitem{Grubbs}
Frank~Ephraim Grubbs.
\newblock {\em Statistical measures of accuracy for riflemen and missile
  engineers}.
\newblock FE Grubbs, 1991.

\bibitem{Hammer1981}
Peter~L Hammer and Bruno Simeone.
\newblock The splittance of a graph.
\newblock {\em Combinatorica}, 1(3):275--284, 1981.

\bibitem{Hoult}
David~I Hoult and RE~Richards.
\newblock The signal-to-noise ratio of the nuclear magnetic resonance
  experiment.
\newblock {\em Journal of Magnetic Resonance (1969)}, 24(1):71--85, 1976.

\bibitem{Kumar}
Ravi Kumar, P~Ragbavan, Sridhar Rajagopalan, and Andrew Tomkins.
\newblock The web and social networks.
\newblock {\em Computer}, 35(11):32--36, 2002.

\bibitem{Newman2001}
Mark~EJ Newman.
\newblock The structure of scientific collaboration networks.
\newblock {\em Proceedings of the national academy of sciences},
  98(2):404--409, 2001.

\bibitem{Pastor}
Romualdo Pastor-Satorras and Alessandro Vespignani.
\newblock Epidemic spreading in scale-free networks.
\newblock {\em Physical review letters}, 86(14):3200, 2001.

\bibitem{Rose}
Albert Rose.
\newblock {\em Vision: human and electronic}.
\newblock Springer Science \& Business Media, 2013.

\bibitem{Shannon}
Claude~Elwood Shannon.
\newblock Communication in the presence of noise.
\newblock {\em Proceedings of the IRE}, 37(1):10--21, 1949.

\bibitem{Shechtman}
Orit Shechtman.
\newblock The coefficient of variation as an index of measurement reliability.
\newblock In {\em Methods of clinical epidemiology}, pages 39--49. Springer,
  2013.

\bibitem{Spagnoletti2021}
Paolo Spagnoletti, Federica Ceci, and Bendik Bygstad.
\newblock Online black-markets: An investigation of a digital infrastructure in
  the dark.
\newblock {\em Information Systems Frontiers}, pages 1--16, 2021.

\bibitem{Tait}
Michael Tait and Josh Tobin.
\newblock Characterizing graphs of maximum principal ratio.
\newblock {\em The Electronic Journal of Linear Algebra}, 34:61--70, 2018.

\bibitem{Thomaz}
Felipe Thomaz.
\newblock The digital and physical footprint of dark net markets.
\newblock {\em Journal of International Marketing}, 28(1):66--80, 2020.

\bibitem{ThomazSoon}
Felipe Thomaz and John Hulland.
\newblock Abnormal structure of digital black markets.
\newblock {\em In preparation.}, 2022.

\bibitem{Tyshkevich1979}
Regina~I Tyshkevich and Arkady~A Chernyak.
\newblock Canonical partition of a graph defined by the degrees of its
  vertices.
\newblock {\em Isv. Akad. Nauk BSSR, Ser. Fiz.-Mat. Nauk}, 5:14--26, 1979.

\bibitem{Wang}
Jianfeng Wang, Haixing Zhao, and Qiongxiang Huang.
\newblock Spectral characterization of multicone graphs.
\newblock {\em Czechoslovak Mathematical Journal}, 62(1):117--126, 2012.

\bibitem{Wasserman}
Stanley Wasserman, Katherine Faust, et~al.
\newblock Social network analysis: Methods and applications.
\newblock 1994.

\bibitem{Watts}
Duncan~J Watts and Steven~H Strogatz.
\newblock Collective dynamics of ‘small-world’networks.
\newblock {\em nature}, 393(6684):440--442, 1998.

\bibitem{ZhangXD}
Xiao-Dong Zhang.
\newblock Eigenvectors and eigenvalues of non-regular graphs.
\newblock {\em Linear Algebra and its Applications}, 409:79--86, 2005.

\end{thebibliography}
\bibliographystyle{plain}

\end{document}